\documentclass[10pt, a4paper]{amsart}

\usepackage{graphicx} 
\usepackage{amsmath}
\usepackage{amssymb}
\usepackage{tikz-cd}
\usepackage{amsthm}
\usepackage{bm}
\usepackage{hyperref}
\usepackage{tikz}
\usepackage{caption}
\usepackage{adjustbox}
\usepackage[dvipsnames]{xcolor}
\usepackage{mathtools}
\usepackage{stmaryrd}

\newtheorem{theorem}{Theorem}[section]
\newtheorem{proposition}[theorem]{Proposition}

\newtheorem{corollary}[theorem]{Corollary}

\theoremstyle{definition}
\newtheorem{example}[theorem]{Example}
\newtheorem{definition}[theorem]{Definition}

\theoremstyle{remark}
\newtheorem{remark}{Remark}[section]

\renewcommand{\d}{\mathrm{d}}
\newcommand{\A}{\mathcal{A}}
\newcommand{\tL}{\tilde{\mathcal{L}}}
\renewcommand{\O}{\mathfrak{o}}

\newcommand{\La}{\mathcal{L}}
\newcommand{\ZA}{\zeta^{\operatorname{ask}}_{\bm{\mu}_\A/\O}(s)}
\newcommand{\ZAK}{\zeta^{\operatorname{ask}}_{\bm{\mu}_\A/\mathcal{O}_K}(s)}

\title[Ask zeta functions of central hyperplane arrangements]{Ask zeta functions of central hyperplane arrangements}
\author{Alec Schmutz}
\date{\today}
\keywords{Ask zeta functions, matrices of linear forms, Igusa's local zeta function, $p$-adic integration, hyperplane arrangements, order complex, flag Hilbert--Poincar\'e series}
\subjclass[2020]{11M41
, 52C35
, 11D88
, 11S80
, 05A15
, 15B33
}

\thanks{The author was supported by the Deutsche Forschungsgemeinschaft (DFG, German Research Foundation) – Project-ID~491392403 – TRR~358}

\begin{document}

\begin{abstract}
    Given a central hyperplane arrangement $\A$ defined over a field of characteristic zero, we construct matrices of linear forms whose local ask zeta functions are recovered by the Igusa local zeta function of the cone over $\mathcal{A}$. Our construction extends a previously established connection between ask zeta function of hypergraphs and the Igusa local zeta function of Boolean arrangements. From a combinatorial standpoint, we introduce the truncated flag Hilbert--Poincar\'e series of $\A$, obtained as a rank specialisation of the flag Hilbert--Poincar\'e series of the cone over $\mathcal{A}$. Whenever $\A$ admits good reduction over the finite field $\mathbb{F}_q$, suitable substitutions of the variables of its truncated flag Hilbert--Poincar\'e series recover the local ask zeta functions associated with $\A$. Such formulae provide a means to study the analytic properties of the ask zeta functions considered, as well as to derive their reduced and topological relatives.
\end{abstract}

\maketitle

\section{Introduction}
The main objective of this paper is to develop the relationship between two classes of rational generating functions: first, ask zeta functions associated with matrices of linear forms (cf. \cite{RossmannAsk/18}); second, Igusa local zeta functions associated with central hyperplane arrangements (cf. \cite{MaglioneVoll/24}). This relation provides a broad class of ask zeta function that can be studied by means of a combinatorially defined multivariate rational function: the flag Hilbert--Poincar\'e series.

\subsection{Background}
We provide a brief overview of the rational generating functions considered before turning our attention to the main results of this paper.
\subsubsection{Ask zeta functions} Introduced in \cite{RossmannAsk/18}, ask zeta functions are rational generating functions that enumerate average sizes of kernels of matrices of linear forms defined over finite rings. Specifically, fixing a ring $R$ and integers $d,e\geq 1$, we let $$A=A(\bm{X})\in M_{d\times e}(R[X_1, \dots , X_m])$$ be a matrix whose entries consist of \emph{linear forms}, where $\bm{X}=(X_i)_{i=1}^m$ is a set of algebraically independent variables over $R$. Considering a finite $R$-algebra $S$, the {\bf{a}}verage {\bf{s}}ize {\bf{k}}ernel of $A(\bm{X})$ is defined by $$\mathrm{ask}_S(A(\bm{X}))= \frac{1}{\lvert S\rvert^m}\sum_{x\in S^m} \lvert \ker(A(x))\rvert.$$ 
Setting $R=\O $ to be a compact discrete valuation ring (cDVR) with maximal ideal $\mathfrak{p}$, the (algebraic) ask zeta function of $A(\bm{X})$ is given by the following Dirichlet series: 
\begin{equation}\label{eq:AlgebraicAsk}
Z^{\mathrm{ask}}_{A/\O }(T)= \sum_{k\geq 0} \mathrm{ask}_{\O /\mathfrak{p}^k}(A(\bm{X})) \ T^{k} \in \mathbb{Q}\llbracket T\rrbracket.
\end{equation}

Substituting $T=q^{-s}$ in equation~(\ref{eq:AlgebraicAsk}) for a complex variable $s$, where $q$ is the size of the finite residue field of $\O $, yields the analytic ask zeta function of $A(\bm{X})$: $$\zeta^{\mathrm{ask}}_{A/\O }(s):=Z^{\operatorname{ask}}_{A/\O }(q^{-s}).$$
Such zeta functions are of group-theoretic interest, as they relate to the (conjugacy) class-counting and orbit-counting zeta functions of unipotent group schemes; cf.~\cite[Thm.~1.7]{RossmannAsk/18}.

\subsubsection{Flag Hilbert--Poincar\'e series} 
\label{Sec:FlagHilbert}
A hyperplane arrangement $\A$ over a field $\mathbb{K}$ is a finite collection of affine hyperplanes in $\mathbb{K}^{d}$. We denote $\dim(\A):= d$. The flag Hilbert--Poincar\'e series of $\mathcal{A}$ is a multivariate rational function introduced in \cite{MaglioneVoll/24}. To define it, let $\La(\A)$ denote the poset of all possible non-empty intersections of the hyperplanes of $\A$, ordered by reverse inclusion. The bottom element $\hat{0}$ is the ambient vector space of the arrangement. The \emph{Poincar\'e polynomial} associated with $\A$ is defined as
$$\pi_{\A}(t)= \sum_{x\in \mathcal{L}(\mathcal{A})} \mu(x) (-t)^{\mathrm{rk}(x)},$$
where $\mu$ denotes the Möbius function on $\La(\A)$, and $\mathrm{rk}$ denotes the rank function on $\La(\A)$; cf.~\cite[Def.~2.48]{orlik_arrangements_1992}. For any $x\in \La(\A)$, the subarrangement $\A_x$ and the restriction $\A^x$ are given by $$\A_x=\{H\in \A : H\supseteq x\} \quad\quad \text{and} \quad\quad \A^x=\{H\cap x: H\in \A\backslash \A_x, \ H\cap x\neq \varnothing\},$$ where we set $\A_{\varnothing}:=\A$. In particular, $\A_{\hat{0}}$ is the empty arrangement. Given $x,y\in \La(\A)$, we can interlace these operations to get $\A_x^y:=(\A_x)^y=(\A^y)_x$.
For a poset $P$, let $\Delta(P)$ denote the abstract simplicial complex whose simplices are given by flags in $P$; cf.~\cite[Chap.~8.4]{petersen_eulerian_2015}. Setting $P=\mathcal{L}(\A)$, the flag Poincar\'e polynomial associated with any $F=(x_1<\dots <x_l)\in \Delta(\La(\A))$ is defined as a product of Poincar\'e polynomials:
\begin{equation*}
\pi_F(X)=\prod_{i=0}^{l}\pi_{\A_{x_{i+1}}^{x_{i}}}(X),
\end{equation*}
where we set $x_0=\hat{0}$ and $x_{l+1}=\varnothing$.
The flag Hilbert--Poincar\'e series of $\mathcal{A}$ is the given by the following multivariate rational function: 
\begin{equation*}
\mathsf{fHP}_{\mathcal{A}}(X, (T_x)_{x\in \mathcal{L}(\A)\backslash\{\hat{0}\}})=\sum_{F\in \Delta(\mathcal{L}(\A)\backslash \{\hat{0}\})} \pi_F(X)\prod_{x\in F}\frac{T_x}{1-T_x}\in \mathbb{Q}(\bm{T})[X].
\end{equation*}
This rational function is of interest due to its connection with Igusa local zeta functions of products of affine linear polynomials, which we now explain.

Let $\A$ be a hyperplane arrangement defined over a field of characteristic zero. In view of \cite[Prop.~6.8.11]{oxley_matroid_2011}, $\A$ is representable over a number field $K$. We let $\mathcal{O}_K$ denote the ring of integers of $K$. Fixing a basis $\{e_1, \dots, e_{\dim(\A)}\}$ of $K^{\dim(\A)}$, we may express any $H\in \A$ as the vanishing locus of a linear affine polynomial $L_H(X_1, \dots, X_{\dim(\A)})\in \mathcal{O}_K[\bm{X}]$. This way, we can identify $\A$ with the collection of polynomials $\{L_H(\bm{X}): H\in \A\}$. Note that these conventions are in accordance with \cite[Sec.~1.1]{MaglioneVoll/24}. In the sequel, any cDVR $\O$ will be assumed to posses an $\mathcal{O}_K$-algebra structure. Examples include $\O$ being a finite extension of the completion of $\mathcal{O}_K$ at a non-zero prime ideal $\mathfrak{p}\subset \mathcal{O}_K$ (characteristic zero), or the power series $\mathbb{F}_q\llbracket t\rrbracket$ for positive characteristics.

The \emph{analytic zeta function of $\mathcal{A}$} is defined by the ensuing Igusa local zeta function:
\begin{equation*}
\zeta_{\mathcal{A}(\O )}(\bm{s})=\int_{\O^{\dim(\A)}} \prod_{x\in \mathcal{L}(\A)\backslash \{\hat{0}\}} \lVert \mathcal{A}_x\rVert^{s_x}\ \d\mu,
\end{equation*}
where $s_x$ is a complex variable associated with every $x\in \mathcal{L}(\mathcal{A})\backslash \{\hat{0}\}$, the measure $d\mu$ is the normalised Haar measure on $\O^{\dim(\A)}$, and $\lVert \mathcal{A}_x\rVert$ denotes the maximal norm on the set of affine forms corresponding to the hyperplanes containing $x$. Under suitable substitutions of the variables, the flag Hilbert--Poincar\'e series and the analytic zeta function of $\A$ determine one another; see~\cite[Thm.~B]{MaglioneVoll/24}. We note that the flag Hilbert--Poincar\'e series, together with its various specialisations, have recently gained traction as a combinatorial object of independent interest; see for instance \cite{KuhneMaglione/23, DorpalenMaglioneStump/25, Stump/25}.

\subsection{Main results}
As in Section~\ref{Sec:FlagHilbert}, we let $\A$ denote a hyperplane arrangement defined over a number field $K$ and $\O$ be a cDVR and an $\mathcal{O}_K$-algebra. We assume moreover that $\A$ is \emph{central}, meaning that $\cap_{H\in \A} H\neq \varnothing$. 
\subsubsection{Ask zeta functions of central hyperplane arrangements}
\label{Sec:BeyondBoolean}
A relationship between ask zeta functions of hypergraphs and Igusa zeta functions of hyperplane arrangements was previously known in \cite[Sec.~4.8]{MaglioneVoll/24}, though it was only established for Boolean arrangements. Here, we extend this relationship much further and explicate a relationship between ask zeta functions and Igusa zeta functions over all central hyperplane arrangements, which subsumes the initial connection observed by Maglione and Voll.
To be specific, we construct matrices of linear forms associated with $\A$, such that their ask zeta functions are recovered by the analytic zeta function of $c\A$, the cone over $\A$.

Let $$\bm{Z}=\{Z_{(H,x)} : (H, x)\in \A\times (\La(\A)\backslash\{\hat{0}\}) \text{ such that } H\supseteq x\}$$ denote a set of algebraically independent variables over $\O$. We construct a matrix of linear forms $A_{\A}^{\bm{1}}(\bm{Z})\in M_{\dim(\A)\times \lvert \La(\A)\rvert}(\O[\bm{Z}])$ as follows: with regards to our choice of basis $\{e_1, \dots, e_{\dim(\A)}\}$ for $\O^{\dim(\A)}$, for any $1\leq i\leq \dim(\A)$ and any $ x\in \La(\A)$, we set
$$ (A_{\A}^{\bm{1}}(\bm{Z}))_{(i,x)}:=\sum_{H\in \A_x} L_H(e_i)Z_{(H, x)}.$$
More generally, given a tuple $\bm{\mu}:=(\mu_x)_{x\in \La(\A)}$ of nonnegative integers, we may repeat the column indexed by $x\in \La(\A)$ in the above matrix $\mu_x$ times, introducing new variables for every additional column; cf. Definition~\ref{DefIntWeighIncRep} and Remark~\ref{Rem:MatrixLinFormsA}. We refer to $\bm{\mu}$ as the vector of \emph{intersection multiplicities}, and denote the resulting matrix of linear forms by $A_{\A}^{\bm{\mu}}(\bm{Z})$. Note that such matrices are reminiscent of the matrices of linear forms associated with hypergraphs considered in \cite[Sec.~3.2]{RossmannVoll/24}.
To simplify notation, the ask zeta function associated with $A_{\A}^{\bm{\mu}}(\bm{Z})$ is denoted by  $$\zeta^{\operatorname{ask}}_{\bm{\mu}_\A/\O}(s):=\zeta_{A_{\A}^{\bm{\mu}}/\O}^{\operatorname{ask}}(s).$$

\begin{example}
\label{ExampleBraidMatrix}
    The braid arrangement $\mathrm{A}_3$ is defined by the set $$\left\{(X_i-X_j): 1\leq i<j\leq 4\right\}.$$ The intersection lattice $\La(\operatorname{A}_3)$ is isomorphic to the poset of partitions on four elements, which we denote by  $\Pi_{4}$; cf. \cite[Prop.~2.9]{orlik_arrangements_1992}. Let $\bm{\mu}=(\mu_x)_{x\in \Pi_4}$ be the vector of intersection multiplicities given by
    \begin{equation*}
    \mu_x=\begin{cases}
        1 &\text{if $x\in \{123, 124, 12|34, 134, 234\}$}, \\
        0 & \text{otherwise.}
    \end{cases}
    \end{equation*}
    The Hasse diagram of $\Pi_4$ is provided in Figure \ref{Fig:IntLatticeBraid}, where the non-dashed edges correspond to incidence relations between the elements of $$\{123, 124, 12|34, 134, 234\}$$ and rank-one elements.
    The matrix of linear forms $A_{\operatorname{A}_3}^{\bm{\mu}}(\bm{Z})$ is then given by
    $$\begin{pmatrix}
         Z_{12}^{123}+Z_{13}^{123} & Z_{12}^{124}+Z_{14}^{124} & Z_{12}^{12|34} & Z_{13}^{134}+Z_{14}^{134} & 0\\
         Z_{23}^{123}-Z_{12}^{123} & Z_{24}^{124}-Z_{12}^{124} & -Z_{12}^{12|34} & 0 & Z_{23}^{234}+Z_{24}^{234}\\
         -Z_{13}^{123}-Z_{23}^{123} & 0 & Z_{34}^{12|34} & Z_{34}^{134}-Z^{134}_{13} & Z_{34}^{234}-Z_{23}^{234} \\ 
         0 & -Z_{14}^{124}-Z_{24}^{124} & -Z_{34}^{12|34} & -Z^{134}_{14}-Z^{134}_{34} & -Z_{24}^{234}-Z_{34}^{234}
    \end{pmatrix},$$
    where the rows are indexed by $\{e_1,e_2,e_3,e_4\}$, and the columns are indexed by the partitions $\{123,124,12|34, 134, 234\}$. In view of the chosen intersection multiplicities, there are $14$ variables appearing in $A_{\operatorname{A}_3}^{\bm{\mu}}(\bm{Z})$, each associated with a non-dashed edge in Figure~\ref{Fig:IntLatticeBraid}.
    \begin{figure}
    \centering
    \begin{tikzpicture}[scale=0.75]
    \draw[thick, dashed] (0,0) -- (1,1); 
        \draw[thick, dashed] (0,0) -- (3,1);  
        \draw[thick, dashed] (0,0) -- (-3,1); 
        \draw[thick, dashed] (0,0) -- (-5,1); 
        \draw[thick, dashed] (0,0) -- (-1,1); 
        \draw[thick, dashed] (0,0) -- (5,1); 

        \draw[thick, dashed] (0,3.5) -- (0,4.5);  
        \draw[thick, dashed] (2,3.5) -- (0,4.5); 
        \draw[thick, dashed] (4,3.5) -- (0,4.5); 
        \draw[thick, dashed] (6,3.5) -- (0,4.5); 
        \draw[thick, dashed] (-2,3.5) -- (0,4.5); 
        \draw[thick, dashed] (-4,3.5) -- (0,4.5);
        \draw[thick, dashed] (-6,3.5) -- (0,4.5);
        
        \draw[thick] (-5,1) -- (-6,3.5); 
        \draw[thick] (-5,1) -- (-4,3.5); 
        \draw[thick] (-5,1) -- (0,3.5); 
        \draw[thick] (-3,1) -- (-6,3.5); 
        \draw[thick, dashed] (-3,1) -- (-2,3.5); 
        \draw[thick] (-3,1) -- (4,3.5); 
        \draw[thick] (-1,1) -- (-4,3.5); 
        \draw[thick, dashed] (-1,1) -- (2,3.5);
        \draw[thick] (-1,1) -- (4,3.5);
        \draw[thick] (1,1) -- (-6,3.5); 
        \draw[thick, dashed] (1,1) -- (2,3.5);
        \draw[thick] (1,1) -- (6,3.5);
        \draw[thick] (3,1) -- (-4,3.5); 
        \draw[thick, dashed] (3,1) -- (-2,3.5);
        \draw[thick] (3,1) -- (6,3.5);
        \draw[thick] (5,1) -- (0,3.5); 
        \draw[thick] (5,1) -- (4,3.5);
        \draw[thick] (5,1) -- (6,3.5);
        
        \node at (0,0) {\Large{\textbullet}};
        
        \node at (1,1) [blue] {\Large{\textbullet}};
        \node at (1,1) [blue, right] {$23$};
        \node at (-3,1) [blue] {\Large{\textbullet}};
        \node at (-3,1) [blue, left] {$13$};
        \node at (3,1) [blue] {\Large{\textbullet}};
        \node at (3,1) [blue, right] {$24$};
        \node at (-1,1) [blue] {\Large{\textbullet}};
        \node at (-1,1) [blue, left] {$14$};
        \node at (-5,1) [blue] {\Large{\textbullet}};
        \node at (-5,1) [blue, left] {$12$};
        \node at (5,1) [blue] {\Large{\textbullet}};
        \node at (5,1) [blue, right] {$34$};
        
        \node at (0,3.5) [red] {\Large{\textbullet}};
        \node at (0,3.5) [red, below] {$12|34$};
        \node at (2,3.5) [red] {\Large{\textbullet}};
        \node at (2,3.5) [red, right] {$14|23$};
        \node at (-2,3.5) [red] {\Large{\textbullet}};
        \node at (-2,3.5) [red, left] {$13|24$};
        \node at (-4,3.5) [red] {\Large{\textbullet}};
        \node at (-4,3.5) [red, left] {$124$};
        \node at (4,3.5) [red] {\Large{\textbullet}};
        \node at (4,3.5) [red, right] {$134$};
        \node at (-6,3.5) [red] {\Large{\textbullet}};
        \node at (-6,3.5) [red, left] {$123$};
        \node at (6,3.5) [red] {\Large{\textbullet}};
        \node at (6,3.5) [red, right] {$234$};
        
        \node at (0,4.5) {\Large{\textbullet}};
        \node at (0,4.5) [above] {$1234$};
        
    \end{tikzpicture}
    \captionof{figure}{The intersection lattice $\mathcal{L}(\mathrm{A}_{3})\simeq \Pi_4$.}
    \label{Fig:IntLatticeBraid}
\end{figure}
\end{example}

The cone over $\mathcal{A}$, denoted by $c\A$, is a central hyperplane arrangement of dimension $\dim(\mathcal{A})+1$. It is defined by the linear polynomials $$\left\{\prescript{h}{}{L_H}(X_1, \dots, X_{\dim(\A)}): H\in \A\right\}\cup \{X_0\},$$
where $\prescript{h}{}{L_H}(\bm{X})$ denotes the homogenisation of $L_H(\bm{X})$. The hyperplane $H_0=\ker(X_0)$ is referred to as the \emph{additional hyperplane}. Since $\A$ is assumed to be central, we have $\prescript{h}{}{L_H}(\bm{X})=L_H(\bm{X})$, and an isomorphism of posets 
\begin{equation*}
\La((c\A)^{H_0})\simeq \La(\A)
\end{equation*}
induced by (\ref{eq:LConeRestr}). Whenever $\underline{x}\in \La((c\A)^{H_0})$, we denote by $x\in \La(\A)$ the corresponding linear subspace under this isomorphism.
Given $n\in \mathbb{Z}$, we define multivariate polynomials for each $\underline{x}\in \La(c\A)$:
    \begin{equation}
    \label{FunctionCentralA}
    f_{n, \underline{x}}(X,Y, (Z_y)_{y\in \La(\A)}):=\begin{cases}
        X+Y-Z_{\hat{0}}-n-1 & \text{if $\underline{x}=H_0,$} \\
        -Z_{x} &\text{if $\underline{x}> H_0$,} \\ 
        0 &\text{otherwise}.
    \end{cases}
    \end{equation}
Our first main result is the following:
\begin{theorem}
\label{ThmMainAskIgusa}
    Let $\A$ be a central hyperplane arrangement defined over a number field $K$. Let $\bm{\mu}=(\mu_x)_{x\in \La(\mathcal{A})}$ be a tuple of nonnegative integers, and set $m=\sum_{x\in \La(\A)}\mu_x$. If $\O$ is a cDVR and an $\mathcal{O}_K$-algebra with residue field of size $q$, then
    \begin{equation*}
    \ZA=(1-q^{-1})^{-1}\zeta_{c\A(\O)}\left( (f_{\dim(\A), \underline{x}}(s, m, \bm{\mu}))_{\underline{x}\in \La(c\A)\backslash\{\hat{0}\}}\right).
    \end{equation*}
\end{theorem}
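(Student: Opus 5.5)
The plan is to compute $\mathrm{ask}_{\O/\mathfrak{p}^k}(A_\A^{\bm\mu}(\bm Z))$ explicitly as a $\mathfrak{p}$-adic integral over $\O^{\dim(\A)}$. I will then absorb the summation over $k$ into one extra integration variable $X_0$, which realises the cone $c\A$. Write $d=\dim(\A)$, $S=\O/\mathfrak{p}^k$, and $n_x=|\A_x|\mu_x$ for the number of variables in the $\mu_x$ columns attached to $x$.

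\textbf{Step 1: rewrite the average as a count over row vectors.} I will use the kernel of $z\mapsto yA(z)$ acting on rows $y\in S^{d}$; if the convention is columns, the same argument applies to the transpose. Then
\[
\sum_{z}|\ker A(z)|=\#\{(y,z): yA(z)=0\}=\sum_{y\in S^d}\#\{z : yA(z)=0\}.
\]
For fixed $y$, the entry of $yA(z)$ in a column indexed by $x$ is $\sum_{H\in\A_x}L_H(y)Z_{(H,x)}$. This is a linear form in that column's own variables, and different columns use disjoint variables. So the count factorises over columns. A single form $\sum_H a_H z_H$ in $|\A_x|$ variables over $S$ has $q^{k(|\A_x|-1)}q^{\min(v(\bm a),k)}$ zeros, where $v(\bm a)=\min_H v(a_H)$. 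For $x=\hat 0$ the column is zero and the factor is $1$, which the same formula gives with $v=\infty$. Dividing by $|S|^{\#\text{variables}}$, each column contributes $q^{-k}\max(\lVert\A_x(y)\rVert,q^{-k})^{-1}$. Hence
\[
\mathrm{ask}_{S}=q^{dk-km}\int_{\O^{d}}\prod_{x\in\La(\A)}\max(\lVert\A_x(y)\rVert,q^{-k})^{-\mu_x}\,\d\mu(y).
\]
Here the sum over $y\in S^d$ becomes $q^{dk}$ times an integral, because the integrand only depends on $y$ modulo $\mathfrak{p}^k$.

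\textbf{Step 2: introduce the cone variable.} Multiply by $q^{-ks}$ and sum over $k\ge0$. Use $\mu(\{w\in\O:|w|=q^{-k}\})=(1-q^{-1})q^{-k}$ to replace $q^{-k}$ by $|w|$, with the factor $(1-q^{-1})^{-1}|w|^{-1}$. The integrand becomes
\[
|w|^{s+m-d-1}\prod_x\max(\lVert\A_x(y)\rVert,|w|)^{-\mu_x}\quad\text{on }\O^{d+1}.
\]
Since $\A$ is central, $\max(\lVert\A_x(y)\rVert,|w|)=\lVert(c\A)_{\underline{x}}(w,y)\rVert$, where $\underline{x}$ corresponds to $x$ under $\La((c\A)^{H_0})\simeq\La(\A)$. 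Concretely, $(c\A)_{\underline{x}}=\A_x\cup\{H_0\}$. For $x=\hat0$ we get $\underline{x}=H_0$ with norm $|w|$, so its exponent combines with the $|w|$-power to give $s+m-\mu_{\hat0}-d-1$. For $x>\hat0$ we get $\underline{x}>H_0$ with exponent $-\mu_x$. All other flats of $c\A$ receive exponent $0$. This is exactly the substitution $f_{d,\underline{x}}(s,m,\bm\mu)$, and it yields the stated identity with the prefactor $(1-q^{-1})^{-1}$.

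\textbf{Main obstacle.} The delicate points are bookkeeping rather than conceptual. First, I must fix the row/column convention so that the kernel of each column decouples; with the opposite convention I would pass to the transpose, or invoke Rossmann's duality for ask zeta functions, before counting. Second, I must check that the identification $\lVert\A_x\cup\{X_0\}\rVert=\max(\lVert\A_x\rVert,|w|)$ matches the poset isomorphism, in particular that every $\underline{x}>H_0$ arises from a unique $x\in\La(\A)\setminus\{\hat0\}$. Third, I need convergence for $\operatorname{Re}(s)\gg0$, so that interchanging the sum over $k$ with the integral is justified.
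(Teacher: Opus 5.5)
Your proof is correct. It arrives at exactly the integral the paper uses, but by a more elementary, self-contained route.

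\textbf{The paper's route.} The paper does no counting. It quotes the general formula (\ref{Eq:AskCoker}) of Rossmann--Voll, which writes any ask zeta function as an integral of $|y|^{s-1+|W|-|V|}$ against the order of the cokernel of the Knuth dual $C_{\eta_\A^{\bm\mu}}(\bm X)$ specialised at $(\bm z,y)$. It then uses \cite[Lem.~2.3]{RossmannVoll/24} to split that cokernel as $\bigoplus_{x}(\O[\bm X]/\langle L_H : H\in\A_x\rangle)^{\mu_x}$. Finally it tensors with $(\O/y)_{\bm z}$ to get $\bigoplus_x(\O/\langle y, L_H(\bm z): H\in\A_x\rangle)^{\mu_x}$, whose order is $\prod_x\lVert y, L_H(\bm z): H\in\A_x\rVert^{-\mu_x}$ (in the paper's notation, where $y$ is the extra coordinate).

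\textbf{How your Step~1 relates.} It unpacks this machinery in the case at hand. Summing over your $y\in S^d$ first is the Knuth-duality swap. Your column-by-column count of zeros of a single linear form is the concrete content of the direct-sum decomposition of the cokernel. Your shell variable $w$, with $|w|=q^{-k}$, plays the role of the extra coordinate in (\ref{Eq:AskCoker}), and your $y$ plays the role of the paper's $\bm z$.

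\textbf{What each route buys.} The paper's route is shorter and sits inside a general module-theoretic framework. Yours needs nothing beyond the definition of ask. It also makes visible where centrality enters: linearity of $L_H$ turns each column entry of $yA(\bm z)$ into $\sum_{H\in\A_x}L_H(y)Z_{(H,e)}$. And it shows why the cone appears.

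\textbf{Your listed obstacles are not real ones.}
\begin{itemize}
\item \emph{Convention.} The row convention you chose is the one in force, since $\theta_A$ sends $u$ to $v\mapsto v^{\intercal}A(u)$ with $v\in R^{d}$. Your fallback is therefore moot. It would not give the same formula anyway: passing to the transpose multiplies $\operatorname{ask}_{\O/\mathfrak{p}^k}$ by $q^{k(m-d)}$ and so shifts $s$.
\item \emph{Identification of flats.} The identification $(c\A)_{\underline{x}}=\{cH: H\in\A_x\}\cup\{H_0\}$ is immediate, because $cx\cap H_0=\{0\}\times x$ and $cH=K\times H$.
\item \emph{Interchange of sum and integral.} This is justified because your integrand is bounded by $|w|^{\operatorname{Re}(s)-d-1}$. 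That gives absolute convergence for $\operatorname{Re}(s)>d$.
\end{itemize}
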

In the event that $\A=\mathrm{A}_1^n$ is the Boolean arrangement of rank $n$, we have $c\mathrm{A}_1^n=\mathrm{A}^{n+1}_1$. Hence, applying Theorem~\ref{ThmMainAskIgusa} recovers \cite[Prop.~4.10]{MaglioneVoll/24}; cf. Remark~\ref{RemarkA1andHypergraph}.

\subsubsection{Truncated flag Hilbert--Poincar\'e series}
Let $\mathfrak{p}$ be the maximal ideal of the cDVR $\O$, and let $\mathbb{F}_q=\O/\mathfrak{p}$ denote the finite residue field. The hyperplane arrangement $\A$ is said to have good reduction over $\mathbb{F}_q$ if $\La(\A)\simeq \La(\A(\O/\mathfrak{p}))$, where $\A(\O/\mathfrak{p})$ denotes the reduction of $\A$ modulo $\mathfrak{p}$. In view of \cite[Prop.~5.13]{Stanley/07}, $\A$ has good reduction over $\mathbb{F}_q$ for almost all prime characteristics. Under this good reduction condition, combining Theorem~\ref{ThmMainAskIgusa} with \cite[Thm.~B]{MaglioneVoll/24} yields an expression for $\zeta_{\bm{\mu}_\A/\O}^{\operatorname{ask}}(s)$ in terms of suitable, albeit unwieldy, substitutions for the variables of $\mathsf{fHP}_{c\A}(X, \bm{T})$. We shall see, however, that the same zeta function can be recovered from much simpler substitutions of a ``truncated analogue" of the flag Hilbert--Poincar\'e series of $\A$, which we now introduce.

For any flag $F\in \Delta(\La(\A))$, let $\min(F)$ denote the minimal element of $F$, and set $\min(\varnothing)=\varnothing$. In particular, whenever $\min(F)=\hat{0}$, we have $\pi_{\A_{\min(F)}}(X)=1$. We define the \emph{truncated flag Poincar\'e polynomial} to be $$\underline{\pi}_F(X):=\frac{\pi_F(X)}{\pi_{\A_{\min(F)}}(X)} \in \mathbb{Z}[X].$$
\begin{definition}
\label{Def:TruncfHP}
Let 
$\bm{T}=(T_x)_{x\in \La(\A)}$ be indeterminates. 
The truncated flag Hilbert--Poincar\'e series associated with $\A$ is 
$$\mathsf{mfHP}_{\mathcal{A}}(X, (T_x)_{x\in \mathcal{L}(\A)})=\sum_{F\in \Delta(\mathcal{L}(\mathcal{A}))}\underline{\pi}_F(X) \prod_{x\in F}\frac{T_x}{1-T_x}\in \mathbb{Q}(\bm{T})[X].$$
\end{definition}
Whenever $\A$ is central, appending the top element $\hat{1}$ to any flag shows that 
\begin{equation}
\label{eq:mfHPcentral}
\mathsf{mfHP}_{\A}(X, \bm{T})=\frac{1}{1-T_{\hat{1}}}\sum_{F\in \Delta(\La(\A)\backslash\{\hat{1}\})} \underline{\pi}_F(X)\prod_{x\in F}\frac{T_x}{1-T_x}.
\end{equation}
Note that our truncated version differs from $\mathsf{fHP}_{\A}(X, \bm{T})$ in two aspects: first, we incorporate the bottom element of $\La(\A)$ into the order complex; second, we truncate each of the flag polynomials $\pi_F(X)$ by dividing out the first factor in its product. Also note that the specialisation $$\mathsf{mfHP}_{\A}(0, \bm{T})=\operatorname{Hilb}(\mathbb{F}[\Delta(\La(\A))]; \bm{T})$$ is the fine Hilbert series of the Stanley-Reisner ring of $\Delta(\La(\A))$; cf.~\cite[Chap.~10.6]{petersen_eulerian_2015}. Our second main result shows that the truncated flag Hilbert--Poincar\'e series of any central arrangement $\A$, defined now over a field of characteristic zero, is obtained as a specific rank specialisation of the flag Hilbert--Poincar\'e series associated with the cone over $\A$. Recall that in the central case, we have an isomorphism of posets $\La(\A)\simeq \La((c\A)^{H_0})$ induced by (\ref{eq:LConeRestr}).
\begin{theorem}
\label{Thm:mfHPandfHP}
    Let $\mathcal{A}$ be a central hyperplane arrangement defined over a field of characteristic zero. Then,
    \begin{equation}
    \label{Eq:substitutionsconefHP}
    (1+X)\mathsf{mfHP}_{\mathcal{A}}(X, (T_x)_{x\in \mathcal{L}(\A)})=\mathsf{fHP}_{c\A}\left(X, ((-X)^{\operatorname{rk}(y)})_{y\not\geq H_0}, (T_x)_{x\geq H_0}\right).
    \end{equation}
\end{theorem}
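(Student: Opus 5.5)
The plan is to prove the identity flag by flag: decompose the order complex of $\La(c\A)\backslash\{\hat 0\}$ according to the hyperplane $H_0$, and show that the part of each flag lying outside the principal upset of $H_0$ sums, after the rank substitution, to the factor $(1+X)$.

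\textbf{Step 1: structure of $\La(c\A)$.} Since $\A$ is central, every $y\in\La(c\A)$ with $y\not\geq H_0$ is the cone $c(x)$ over a unique $x\in\La(\A)$, and $\operatorname{rk}(y)=\operatorname{rk}(x)$. The elements $z\geq H_0$ correspond bijectively to $\La(\A)$ via $z=c(x)\cap H_0$, with $\operatorname{rk}(z)=\operatorname{rk}(x)+1$; the element $H_0$ itself corresponds to $\hat 0\in\La(\A)$. The set $U=\{z\geq H_0\}$ is an upset. Hence every flag $F\in\Delta(\La(c\A)\backslash\{\hat 0\})$ splits uniquely as $F=F'\sqcup F''$, where $F'=(y_1<\dots<y_l)$ is an initial chain avoiding $U$ and $F''$ is a chain in $U\simeq\La(\A)$.

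\textbf{Step 2: local structure.} For $z=\min F''$ corresponding to $x\in\La(\A)$, I check the following identifications:
\begin{itemize}
\item $(c\A)_z\simeq c(\A_x)$;
\item for consecutive $z<z'$ in $F''$, with $z'$ corresponding to $x'$, the minor $(c\A)^{z}_{z'}$ is isomorphic to $\A^{x}_{x'}$, compatibly with restriction to $H_0$ (cf. (\ref{eq:LConeRestr})).
\end{itemize}
Consequently the Poincar\'e factors of $\pi_F(X)$ coming from $F''$ reproduce exactly $\underline{\pi}_{\tilde F}(X)$, where $\tilde F\in\Delta(\La(\A))$ is the image of $F''$. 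The weights $T_z/(1-T_z)$ for $z\in F''$ become the weights $T_x/(1-T_x)$ of $\mathsf{mfHP}_\A$.

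\textbf{Step 3: the inner sum.} Fix $F''\neq\varnothing$ with minimum $z\leftrightarrow x$. The remaining task is the identity
\begin{equation*}
\sum_{F'}\ \prod_{i=0}^{l}\pi_{B^{y_i}_{y_{i+1}}}(X)\prod_{j=1}^{l}\frac{(-X)^{\operatorname{rk}(y_j)}}{1-(-X)^{\operatorname{rk}(y_j)}}=1+X .
\end{equation*}
Here $B=c(\A_x)$, we set $y_0=\hat 0$ and $y_{l+1}=z$, and the sum runs over chains $F'$ in $\La(B)\backslash\{\hat 0\}$ avoiding the upset of $H_0$. The right-hand side $1+X$ is the Poincar\'e polynomial of the single hyperplane $H_0$. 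The extra $\pi_{\A_x}$ that would otherwise appear is exactly what the truncation $\underline{\pi}$ removes.

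Separately, I show that flags with $F''=\varnothing$ contribute $0$ in total. Summing Step 3 over all $z$ then gives $(1+X)\,\mathsf{mfHP}_\A$, proving (\ref{Eq:substitutionsconefHP}).

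\textbf{Step 4: the identity in Step 3 (main obstacle).} My first approach is to use \cite[Thm.~B]{MaglioneVoll/24} at $\bm s=0$. There the integral is the volume of $\O^{\dim}$, so the rank specialisation $T_y=(-X)^{\operatorname{rk}(y)}$ of $\mathsf{fHP}_C$ is a known simple expression for every central arrangement $C$. By good reduction at infinitely many $q$, this becomes a polynomial identity in $X=-q^{-1}$. Applying it to $C=B$, I split the sum over all flags of $\La(B)$ according to the first element lying in the upset of $H_0$. This yields a triangular system relating the desired inner sums for the arrangements $c(\A_{x'})$, $x'\leq x$, to already-known quantities. I then solve it by induction on $\operatorname{rk}(x)$, using deletion--restriction $\pi_{cC}(X)=(1+X)\pi_C(X)$ to match the crossing factors. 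The same bookkeeping applied to $\La(c\A)$ itself should show that the $F''=\varnothing$ terms cancel.

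If the $p$-adic route gets messy, the fallback is a direct Möbius-function computation. The sum over chains with weights $T/(1-T)$ at $T=(-X)^{\operatorname{rk}}$ telescopes through Philip Hall's theorem on $[\hat 0,z]$, and I would verify the identity from that.
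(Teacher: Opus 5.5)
Your decomposition is the paper's: split each flag of $\mathcal{L}(c\mathcal{A})\setminus\{\hat 0\}$ at the upset of $H_0$, identify the factors above $H_0$ with $\underline{\pi}$, and evaluate the part below via Theorem~\ref{ThmfHP} at $\bm{s}=\bm{0}$. However, your bookkeeping contains a false step.

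\emph{Flags with $F''=\varnothing$ do not contribute $0$.} These are the flags $cF'$ with $F'=(y_1<\dots<y_l)\in\Delta(\mathcal{L}(\mathcal{A})\setminus\{\hat 0\})$. Every factor of $\pi_{cF'}(X)$ is $\pi_{\mathcal{A}^{y_i}_{y_{i+1}}}(X)$, except the last. The last factor is the restriction $(c\mathcal{A})^{cy_l}$, which is the product of $\mathcal{A}^{y_l}$ with the one-hyperplane arrangement $\{0\}\subset K$, so it contributes $(1+X)\pi_{\mathcal{A}^{y_l}}(X)$; for $F'=\varnothing$ the factor is $\pi_{c\mathcal{A}}=(1+X)\pi_{\mathcal{A}}$. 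Hence these flags contribute in total $(1+X)\,\mathsf{fHP}_{\mathcal{A}}(X,((-X)^{\operatorname{rk}(y)})_y)=1+X$.

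You need exactly this term. The sum defining $\mathsf{mfHP}_{\mathcal{A}}$ includes the empty flag, and $\underline{\pi}_{\varnothing}=\pi_{\mathcal{A}}/\pi_{\mathcal{A}_{\varnothing}}=1$. So the left-hand side contains a $T$-free term $1+X$ that no nonempty $F''$ can produce, and your accounting would yield $(1+X)(\mathsf{mfHP}_{\mathcal{A}}-1)$. A check with $\mathcal{A}=\mathrm{A}_1^1$: the flags $\varnothing$ and $(c\hat 1)$ give $(1+X)^2+(1+X)^2\cdot\frac{-X}{1+X}=1+X$. The paper avoids this slip by letting $F$ range over all of $\Delta(\mathcal{L}(\mathcal{A}))$, with the empty flag playing the role of ``$z=\varnothing$'' and $\mathcal{A}_\varnothing=\mathcal{A}$.

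\emph{Step 4 needs no triangular system, and the one you describe does not close.} Suppose you specialise all variables of $\mathsf{fHP}_{c(\mathcal{A}_x)}$ to $(-X)^{\operatorname{rk}}$ and cut at the first element above $H_0$.
\begin{itemize}
\item The part of the cone's sum lying above $H_0$ consists of sums over chains in intervals $[x',x]$ evaluated at $T\mapsto(-X)^{\operatorname{rk}+1}$. These are not already-known quantities; evaluating them is essentially the statement you are proving.
\item The part of the cone's sum avoiding the upset is your unknown inner sum for $x$ itself, since $z$ is the top of $c(\mathcal{A}_x)$.
\end{itemize}

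The direct argument, which is the paper's, runs as follows.
\begin{itemize}
\item In your Step 3 product, only the crossing factor $B^{y_l}_z$ involves $H_0$. It is the product of $\mathcal{A}^{y_l}_x$ with $\{0\}\subset K$, so it contributes $(1+X)\pi_{\mathcal{A}^{y_l}_x}(X)$ by \cite[Prop.~2.51]{orlik_arrangements_1992}.
\item All other factors are $\pi_{\mathcal{A}^{y_i}_{y_{i+1}}}(X)$.
\item Hence your Step 3 sum is literally $(1+X)\,\mathsf{fHP}_{\mathcal{A}_x}(X,((-X)^{\operatorname{rk}(y)})_{\hat 0<y\leq x})$.
\item Theorem~\ref{ThmfHP} at $\bm{s}=\bm{0}$, applied to $\mathcal{A}_x$ itself rather than its cone, gives the integral value $1$ at $X=-q^{-1}$ for infinitely many $q$. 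So this $\mathsf{fHP}$ specialisation is identically $1$.
\end{itemize}
The same computation with $\mathcal{A}_x$ replaced by $\mathcal{A}$ produces the missing term $1+X$ above.
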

Utilising Theorem~\ref{Thm:mfHPandfHP}, we can derive a functional equation under inversion of the variables (``self-reciprocity result") for the truncated flag Hilbert--Poincar\'e series of central arrangements defined over fields of characteristic zero. To be precise, the following result is obtained by combining Theorem~\ref{Thm:mfHPandfHP} with the self-reciprocity result \cite[Thm.~A]{MaglioneVoll/24} applied to $c\A$.
\begin{corollary}
\label{Cor:mfHPSelf}
    Let $\mathcal{A}$ be a central hyperplane arrangement defined over a field of characteristic zero. Then 
    $$\mathsf{mfHP}_{\A}(X^{-1}, (T_x^{-1})_{x\in \mathcal{L}(\A)})=-(-X)^{-\mathrm{rk}(\A)}T_{\hat{1}}\mathsf{mfHP}_{\A}(X, \bm{T}).$$
\end{corollary}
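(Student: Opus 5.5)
We derive Corollary~\ref{Cor:mfHPSelf} by transporting the self-reciprocity of $\mathsf{fHP}_{c\A}$ through the specialisation of Theorem~\ref{Thm:mfHPandfHP}. By \cite[Thm.~A]{MaglioneVoll/24}, applied to the central arrangement $c\A$ of rank $\mathrm{rk}(\A)+1$, we have
$$\mathsf{fHP}_{c\A}(X^{-1},(T_y^{-1})_{y})=(-X)^{-\mathrm{rk}(c\A)}\,T_{\hat{1}_{c\A}}\,\mathsf{fHP}_{c\A}(X,\bm{T}),$$
as an identity of rational functions in independent variables $X,(T_y)_{y\in\La(c\A)\setminus\{\hat 0\}}$. (I will double-check the exact sign and exponent conventions of Thm.~A there, since the whole computation hinges on them.) We may therefore specialise both sides along the same substitution as in (\ref{Eq:substitutionsconefHP}): set $T_y=(-X)^{\mathrm{rk}(y)}$ for $y\not\geq H_0$, and keep $T_x$ for $x\geq H_0$, using $\La((c\A)^{H_0})\simeq\La(\A)$.

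The key observation is that this substitution is compatible with inversion. Replacing $X$ by $X^{-1}$ sends $(-X)^{\mathrm{rk}(y)}$ to $((-X)^{\mathrm{rk}(y)})^{-1}$. Hence the left side of Thm.~A, after specialisation, equals $\mathsf{fHP}_{c\A}$ evaluated at $X^{-1}$, at $(-X^{-1})^{\mathrm{rk}(y)}$ for $y\not\ge H_0$, and at $T_x^{-1}$ for $x\geq H_0$. By Theorem~\ref{Thm:mfHPandfHP}, with $X$ replaced by $X^{-1}$ and $T_x$ by $T_x^{-1}$, this equals $(1+X^{-1})\,\mathsf{mfHP}_\A(X^{-1},\bm{T}^{-1})$.

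The right side is $(-X)^{-\mathrm{rk}(\A)-1}\,T_{\hat 1_{c\A}}\,(1+X)\,\mathsf{mfHP}_\A(X,\bm{T})$. Since $H_0\le \hat 1_{c\A}$, the top element of $c\A$ corresponds to $\hat 1\in\La(\A)$ under the isomorphism, so $T_{\hat1_{c\A}}=T_{\hat 1}$. One should also check that this specialisation is legitimate, i.e.\ that no denominator $1-T_y$ becomes zero. This holds because $\mathrm{rk}(y)\ge1$ for $y\neq\hat0$, so $(-X)^{\mathrm{rk}(y)}\neq 1$ as a rational function.

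Dividing by $1+X^{-1}=X^{-1}(1+X)$ leaves
$$\mathsf{mfHP}_\A(X^{-1},\bm T^{-1})=X\,(-X)^{-\mathrm{rk}(\A)-1}T_{\hat1}\,\mathsf{mfHP}_\A(X,\bm T)=-(-X)^{-\mathrm{rk}(\A)}T_{\hat 1}\,\mathsf{mfHP}_\A(X,\bm T),$$
which is the claim. The only delicate point is bookkeeping: confirming the rank identification $\mathrm{rk}(c\A)=\mathrm{rk}(\A)+1$ and that Thm.~A holds as an identity of rational functions, so that it survives the specialisation.
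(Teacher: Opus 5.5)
Your proposal is correct and follows exactly the route the paper indicates: you specialise the self-reciprocity identity \cite[Thm.~A]{MaglioneVoll/24} for the central arrangement $c\A$ (of rank $\mathrm{rk}(\A)+1$, over the same characteristic-zero field) along the substitution of Theorem~\ref{Thm:mfHPandfHP}, use that $T_y=(-X)^{\mathrm{rk}(y)}$ is compatible with inversion and that $\hat 1_{c\A}$ corresponds to $\hat 1$, and divide by $1+X^{-1}=X^{-1}(1+X)$. The convention $(-X)^{-\mathrm{rk}}T_{\hat 1}$ you quote for Thm.~A is the right one (it already checks out for a single hyperplane), so your sign bookkeeping yields exactly the factor $-(-X)^{-\mathrm{rk}(\A)}T_{\hat 1}$.
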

Corollary~\ref{Cor:mfHPSelf} is leveraged in Section~\ref{Sec:FunEquations} as a means to derive local functional equations for the ask zeta functions associated with $\A$ under inversion of the prime power $q$. The result we obtain is in accordance with \cite[Thm.~4.18]{RossmannAsk/18}, which establishes the local functional equations under inversion of $q$ for \emph{all} ask zeta functions.

Under good reduction assumptions, combining Theorem~\ref{ThmMainAskIgusa}, \cite[Thm.~B]{MaglioneVoll/24}, and Theorem~\ref{Thm:mfHPandfHP} yields explicit, combinatorial formulae for the ask zeta functions associated with the matrices of linear forms constructed from central hyperplane arrangements.
\begin{corollary}
\label{CorollarymfHP}
    Let $\A$ be a central hyperplane arrangement defined over a number field $K$, and let $\bm{\mu}=(\mu_x)_{x\in \La(\A)}$ be a tuple of nonnegative integers. If $\O$ is a cDVR and an $\mathcal{O}_K$-algebra with residue field of size $q$ such that $\A$ has good reduction over $\mathbb{F}_q$, then  $$\ZA=\mathsf{mfHP}_{\A}(-q^{-1}, (q^{-s}q^{\dim(\A)-\operatorname{rk}(y)-\sum_{x\not\leq y} \mu_x})_{y\in \mathcal{L}(\A)}).$$
\end{corollary}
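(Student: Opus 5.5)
The corollary follows by chaining three results. Theorem~\ref{ThmMainAskIgusa} expresses $\ZA$ through $\zeta_{c\A(\O)}$. \cite[Thm.~B]{MaglioneVoll/24}, applied to $c\A$, turns that integral into $\mathsf{fHP}_{c\A}$. Theorem~\ref{Thm:mfHPandfHP} then collapses the result to $\mathsf{mfHP}_{\A}$. First I note that good reduction of $\A$ over $\mathbb{F}_q$ implies good reduction of $c\A$: the lattice $\La(c\A)$ is determined by $\La(\A)$ together with the additional coordinate $X_0$, and reduction mod $\mathfrak p$ respects this. This lets us apply \cite[Thm.~B]{MaglioneVoll/24} to $c\A$.

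That theorem, in the form I expect to use, states
\[
\zeta_{c\A(\O)}(\bm{s})=\mathsf{fHP}_{c\A}\bigl(-q^{-1},(T_{\underline{x}})_{\underline{x}}\bigr),\qquad
T_{\underline{x}}=q^{-\operatorname{rk}(\underline{x})-\sum_{\underline{y}\le \underline{x}} s_{\underline{y}}}.
\]
I substitute $s_{\underline{y}}=f_{\dim(\A),\underline{y}}(s,m,\bm{\mu})$ as prescribed in Theorem~\ref{ThmMainAskIgusa}. The substituted values are $s+m-\mu_{\hat 0}-\dim(\A)-1$ at $H_0$, $-\mu_y$ at $\underline{y}>H_0$, and $0$ elsewhere. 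I then compute $T_{\underline{x}}$ in two cases.

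\textbf{Case $\underline{x}\not\geq H_0$.} No nonzero $s_{\underline{y}}$ lies below $\underline{x}$, since every $\underline y$ with a nonzero value satisfies $\underline y\ge H_0$. Hence
\[
T_{\underline{x}}=q^{-\operatorname{rk}(\underline{x})}=(-X)^{\operatorname{rk}(\underline{x})}\quad\text{at } X=-q^{-1}.
\]
This is exactly the specialisation in Theorem~\ref{Thm:mfHPandfHP}.

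\textbf{Case $\underline{x}\ge H_0$.} Write $\underline{x}$ for the element corresponding to $x\in\La(\A)$. Rank in $c\A$ satisfies $\operatorname{rk}(\underline{x})=\operatorname{rk}(x)+1$. The elements $\underline{y}$ with $H_0\le\underline{y}\le\underline{x}$ correspond to $y\le x$ in $\La(\A)$, with $H_0$ itself corresponding to $\hat 0$. Using $\sum_{\hat 0<y\le x}\mu_y+\mu_{\hat 0}=\sum_{y\le x}\mu_y$, the exponent simplifies to
\[
-\operatorname{rk}(x)-1-s-m+\sum_{y\le x}\mu_y+\dim(\A)+1
= -s+\dim(\A)-\operatorname{rk}(x)-\sum_{y\not\le x}\mu_y .
\]
This matches the claimed substitution.

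Applying Theorem~\ref{Thm:mfHPandfHP} at $X=-q^{-1}$ gives
\[
\zeta_{c\A(\O)}=(1-q^{-1})\,\mathsf{mfHP}_{\A}(-q^{-1},\dots),
\]
and the prefactor $(1-q^{-1})^{-1}$ in Theorem~\ref{ThmMainAskIgusa} cancels it.

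The main obstacle is pinning down the exact normalisation in \cite[Thm.~B]{MaglioneVoll/24}: whether the substitution is $X=-q^{-1}$, whether the exponent in $T_{\underline{x}}$ uses $\operatorname{rk}(\underline{x})$ or the codimension, and whether the sum is over $\underline y\le\underline x$ or is weighted by $\lvert\A_{\underline{y}}\rvert$. The case $\underline{x}\not\geq H_0$ forces $T=q^{-\operatorname{rk}}$ exactly when all $s$ vanish. I will calibrate the normalisation against the trivial case $\bm{s}=0$, where $\zeta=1$, and check that the computation matches the Boolean example \cite[Prop.~4.10]{MaglioneVoll/24}.
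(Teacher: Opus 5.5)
Your proposal is correct and is essentially the paper's proof. It combines Theorem~\ref{ThmMainAskIgusa} with Theorem~\ref{ThmfHP} applied to $c\A$, computes $g_{\underline{y}}$ of the substituted values separately for $\underline{y}\not\geq H_0$ and $\underline{y}\geq H_0$ (using $\operatorname{rk}_{c\A}(\underline{y})=\operatorname{rk}_{\A}(y)+1$ and $m-\sum_{x\leq y}\mu_x=\sum_{x\not\leq y}\mu_x$), and applies Theorem~\ref{Thm:mfHPandfHP} at $X=-q^{-1}$ so that $1+X=1-q^{-1}$ cancels the prefactor. The normalisation you were unsure about is exactly the one in Theorem~\ref{ThmfHP} as stated in the paper ($X=-q^{-1}$, $g_x(\bm{s})=\operatorname{rk}(x)+\sum_{\hat{0}\neq y\leq x}s_y$); your remark that good reduction passes from $\A$ to $c\A$ (via $\La(c\A)\simeq\La(\A)\times\La(\mathcal{B})$, valid over any field) makes explicit a point the paper leaves implicit.
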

We may view Corollary~\ref{CorollarymfHP} as a generalisation of \cite[Thm.~C]{RossmannVoll/24}: setting $\A=\mathrm{A}_1^n$ yields the combinatorial formula in terms of weak orders associated with the ask zeta functions of hypergraphs; see Example~\ref{Ex:HypergraphmfHP}. In view of the Cograph Modelling Theorem \cite[Thm~D]{RossmannVoll/24}, it is still of interest to determine whether the truncated flag Hilbert--Poincar\'e series of other hyperplane arrangements can recover the class counting zeta functions of unipotent group schemes associated with non-cographs.

In the same manner as \cite[Thm.~F]{RossmannVoll/24}, Corollary~\ref{CorollarymfHP} has remarkable consequences regarding the analytic properties of the ask zeta functions considered. For a central hyperplane arrangement defined over a number field $K$, we consider the Dirichlet series 
\begin{equation*}
\ZAK=\sum_{I\subseteq \mathcal{O}_K} \operatorname{ask}_{\mathcal{O}_K/I}\left(A_{\A}^{ \bm{\mu}}(\bm{Z})\right)\lvert \mathcal{O}_K/I\rvert^{-s},
\end{equation*}
where $I$ ranges over the non-zero ideals of the ring of integers $\mathcal{O}_K$ of $K$. We denote the abscissa of convergence of this Dirichlet series by $\alpha(\A, \bm{\mu})$, which determines the degree of polynomial growth of the summatory function of its coefficients; cf.~\cite[Chap.~8]{apostol_modular_1990}. Rossmann \cite[Thm.~4.20]{RossmannAsk/18} proved that $\alpha(\A, \bm{\mu})$ is a possibly negative rational number not larger than $\dim(\A)+1$. Our next result shows that $\alpha(\A, \bm{\mu})$ is in fact an integer and is determined explicitly by the combinatorics of $\A$. 
\begin{theorem}
\label{Thm:GlobAnalProp}
    Let $\A$ be a central hyperplane arrangement defined over a number field $K$. For any tuple of nonnegative integers $\bm{\mu}=(\mu_x)_{x\in \La(\A)}$, the abscissa of convergence of $\ZAK$ is a positive integer. It satisfies $$\alpha(\A, \bm{\mu})=\max_{x\in \La(\A)}\left\{\dim(\A)+1-\mathrm{rk}(x)-\sum_{y\not\leq x}\mu_y\right\}\in [\dim(\A)-\mathrm{rk}(\A)+1, \dim(\A)+1],$$ and is independent of $K$.
\end{theorem}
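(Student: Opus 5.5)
The plan is to write the global Dirichlet series as an Euler product and to use Corollary~\ref{CorollarymfHP} for almost all local factors. The first step is multiplicativity. By the Chinese remainder theorem, $\operatorname{ask}_{\mathcal{O}_K/I}$ is multiplicative in $I$, so
$$\ZAK=\prod_{\mathfrak{p}}\zeta^{\operatorname{ask}}_{\bm{\mu}_\A/\mathcal{O}_{K,\mathfrak{p}}}(s).$$
For the finitely many primes of bad reduction, the local factors are rational in $q^{-s}$. By \cite[Thm.~4.20]{RossmannAsk/18}, or directly since each local factor is bounded by the trivial count, their abscissae are at most the final value. So they affect neither the abscissa nor its integrality, provided we check that they have no poles to the right of the candidate value. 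For the remaining primes, Corollary~\ref{CorollarymfHP} gives
$$\zeta_\mathfrak{p}(s)=\mathsf{mfHP}_{\A}\Bigl(-q^{-1},\bigl(q^{-s+a_y}\bigr)_{y}\Bigr),\qquad a_y:=\dim(\A)-\operatorname{rk}(y)-\sum_{x\not\leq y}\mu_x .$$

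Next I expand each local factor as a sum over flags. Every term is $\underline{\pi}_F(-q^{-1})\prod_{y\in F}\frac{q^{a_y-s}}{1-q^{a_y-s}}$. Here $\underline{\pi}_F(-q^{-1})=1+O(q^{-1})$, a polynomial in $q^{-1}$ with constant term $1$. Writing $W(q,q^{-s})$ for this sum, the flag of length zero contributes $1$, and the singletons $\{y\}$ contribute $q^{a_y-s}+\dots$. So
$$W=1+\sum_y q^{a_y-s}+(\text{terms } q^{b-cs}\text{ with } b\le \max_y a_y\cdot c \text{ or smaller}).$$
I would then invoke the standard criterion (du Sautoy--Grunewald, as used in \cite[Thm.~F]{RossmannVoll/24}). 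For an Euler product $\prod_p W(p,p^{-s})$ with $W=1+\sum_i c_i X^{b_i}Y^{d_i}$ a finite sum after clearing the geometric denominators, the abscissa is $\max_i (b_i+1)/d_i$. One also has to control the denominators $\prod(1-q^{a_y-s})$: they converge for $\operatorname{Re}s>a_y$, and this is dominated by the bound below.

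The key computation is to show $\max_i(b_i+1)/d_i=\max_y a_y+1$. The linear terms with $d=1$ give exactly $a_y+1$. For a flag with $c$ elements, the leading exponent of $q$ is $\sum_{y\in F}a_y$, and lower-order corrections only decrease it, so $(b+1)/c\le(\sum_{y\in F} a_y+1)/c\le \max a_y+1$. This requires $\max a_y\ge0$, which holds at $y=\hat{1}$, where $a_{\hat 1}=\dim(\A)-\operatorname{rk}(\A)\ge0$. Hence
$$\alpha=\max_y a_y+1=\max_x\Bigl\{\dim(\A)+1-\operatorname{rk}(x)-\sum_{y\not\leq x}\mu_y\Bigr\}.$$
This is an integer and is independent of $K$, since only the combinatorics of $\A$ enters and $\mathcal{O}_K$ has infinitely many primes of each relevant type with Dedekind zeta abscissa $1$. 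The bounds follow from two observations. At $x=\hat{1}$ there are no $y\not\le\hat 1$, which gives the lower bound $\dim(\A)-\operatorname{rk}(\A)+1\ge1$, so $\alpha$ is positive. Since $\operatorname{rk}\ge0$ and $\mu\ge0$, the upper bound is $\dim(\A)+1$.

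The main obstacle will be making the criterion rigorous. A rational $W$ with denominators must be rewritten as a finite Euler-product-compatible form, for instance by multiplying by $\prod_y(1-q^{a_y-s})$, which only introduces zeta-type factors $\zeta_K(s-a_y)$ with abscissa $a_y+1$. I would also have to verify that no cancellation kills the pole at $\max a_y+1$. For this I would argue that the coefficient of $q^{\max a_y - s}$ in the numerator is a positive count, namely the number of $y$ attaining the maximum, before lower corrections. So the factor $\zeta_K(s-\max a_y)$ genuinely survives, with multiplicity equal to that positive count.
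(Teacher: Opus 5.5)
Your treatment of the places of good reduction follows the paper. Proposition~\ref{Prop:AbsRegular} uses Corollary~\ref{CorollarymfHP} to write the local factors as $W_\A(q_v,q_v^{-s})$, clears the denominators $1-X^{a_y}T$ to extract $\prod_y\zeta_K(s-a_y)$, and then argues as in \cite[Thm.~5.27]{RossmannVoll/24}. One small correction there. After multiplying by $\prod_y(1-X^{a_y}Y)$, the $Y$-linear part of the numerator is $\sum_y(\underline{\pi}_{\{y\}}(-X^{-1})-1)X^{a_y}$. So the top terms cancel, and there is no positive count left in the numerator. The multiplicity of the pole lives in the factors $\zeta_K(s-a_y)$. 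What you must check is that the numerator's Euler product converges absolutely for $\mathrm{Re}(s)>\max_y a_y+\tfrac12$ and is nonzero at $s=\max_y a_y+1$. This holds because each local factor there is $W$ (which is at least $1$) times a positive product.

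The genuine gap is at the finitely many places of bad reduction: it is exactly the proviso you state and never discharge. All coefficients are nonnegative, so the global abscissa is the maximum of the abscissa of the good part and the abscissae of the bad local factors. You therefore need each bad factor to have abscissa at most $\alpha=\max_y a_y+1$, and neither justification you offer gives this.

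\begin{itemize}
\item The trivial bound $\lvert\ker A(x)\rvert\le\lvert\O/\mathfrak{p}^k\rvert^{\dim(\A)}$ only gives local abscissa at most $\dim(\A)$, whereas $\alpha$ can be smaller. For example, if $\mu_{\hat 1}\ge\mathrm{rk}(\A)\ge 2$ and all other $\mu_x=0$, then $\alpha=\dim(\A)-\mathrm{rk}(\A)+1<\dim(\A)$.
\item \cite[Thm.~4.20]{RossmannAsk/18} concerns the global abscissa (rational, at most $\dim(\A)+1$). It does not compare individual bad factors with the value you computed from the good ones.
\end{itemize}

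A bad factor with a real pole in $(\alpha,\dim(\A)]$ would change the answer and could destroy integrality.

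The paper closes this gap with Proposition~\ref{Prop:LocalPoles}:
\begin{itemize}
\item By Theorem~\ref{ThmMainAskIgusa}, each local factor is an Igusa integral attached to $c\A$.
\item The maximal De Concini--Procesi wonderful model of $c\A$ is a principalisation defined over $K$, hence valid at every place irrespective of reduction. Its exceptional divisors are indexed by $\tL(c\A)$, with $\nu_t=\mathrm{rk}(t)$ and $N_{tx}=1$ if and only if $x\le t$.
\item Via \cite[Cor.~4.2, Prop.~4.5]{AvniKlopschOnnVoll/13}, the only possible denominators are $1-q^{-g_x(\bm{s})}$.
\item After the substitution $f_{\dim(\A),\underline{x}}$, all real parts of poles lie in $\mathcal{P}^{\bm{\mu}}_{\A}$, whose maximum is $\alpha-1$.
\end{itemize}

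Some such input, uniform over the reduction type, is indispensable to finish your argument.
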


\subsection{Notation}
We let $\mathbb{N}:=\{1,2,\dots\}$ be the set of positive integers. For any $n\in \mathbb{N}$, we set $[n]:=\{1, \dots, n\}$. Given a poset $P$, we write $\hat{0}$ for the bottom element, and $\hat{1}$ for the top element, provided they exist. We let $\Delta(P)$ denote the order complex of $P$, and $2^P$ denote the power set of $P$. Given a flag $F=(x_1<\dots < x_l)\in \Delta(P)$, we let $\min(F):=x_1$ and $\max(F):=x_l$. The concatenation of two flags $F, F^{\prime}\in \Delta(P)$ such that $\max(F)\leq \min(F^{\prime})$ is denoted by $F\Vert F^{\prime}$. Given a hyperplane arrangement $\A$, $\La(\A)$ is the intersection lattice of $\A$, and we set $\tL(\A):=\La(\A)\backslash \{\hat{0}\}$. The dimension of $\A$ is denoted by $d(\A):=\dim(\A)$, and the rank of any element $x\in \La(\A)$ is $r_\A(x):=\mathrm{rk}_\A(x)$. Whenever confusion is unlikely, we omit the index $\A$ when referring to the rank function. 

We use $K$ to denote a number field, and $\mathcal{O}_K$ for its ring of integers. Throughout, $\O$ denotes a compact discrete valuation ring (cDVR) of arbitrary characteristic admitting an $\mathcal{O}_K$-algebra structure. Its maximal ideal is denoted by $\mathfrak{p}$, and we denote its residue field by $\mathbb{F}_q=\O/\mathfrak{p}$. 

\section{Preliminaries}
\label{Sec:Preliminaries}

\subsection{Ask zeta functions}
We provide a brief summary of the essential theory surrounding ask zeta functions, all of which can be found in more detail in \cite{RossmannAsk/18, RossmannAskII/20, RossmannVoll/24}. Let $R$ be a ring, and consider a matrix of linear forms $$A:= A(\bm{Z})\in M_{d\times e}(R[Z_1, \dots, Z_m]).$$ The matrix $A$ gives rise to an $R$-module homomorphism $$\theta_A\colon  \begin{array}{rcl}
   R^m & \to  & \operatorname{Hom}_R(R^d, R^e) \\
    u & \mapsto & (v\mapsto v^{\intercal}A(u))
\end{array},$$ which we refer to as a \emph{module representation}. Given a finite set $U$, we let $RU$ denote the free $R$-module of rank $\lvert U\rvert$. Letting $U, V$ and $W$ be finite sets, any module homomorphism $$\theta\colon R U\rightarrow \operatorname{Hom}_R(RV, RW)$$
is encoded by a matrix of linear forms in the following manner: 
    \begin{equation}
    \label{eq:MatrixLinForms}
    A_{\theta}:=\left(\sum_{u\in U}Z_u u\right) (\theta\otimes{R[(Z_u)_{u\in U}]})\in \text{Hom}_{R[\bm{Z}]}(R[\bm{Z}]V, R[\bm{Z}]W),
    \end{equation}
    where $\bm{Z}=(Z_u)_{u\in U}$ denotes a set of algebraically independent variables over $R$.
In particular, whenever $R=\O$ is a cDVR, we set $$Z_{\theta/\O}^{\mathrm{ask}}(T):= Z^{\mathrm{ask}}_{A_{\theta}/\O}(T);$$ cf. equation~(\ref{eq:AlgebraicAsk}). The \emph{Knuth dual} of $\theta$ is given by
$$
\theta^{\circ}\colon \begin{array}{rcl} RV & \to & \text{Hom}(RU, RW) \\ v & \mapsto & (u \mapsto v(u\theta))
\end{array},$$ and the matrix of linear forms associated with $\theta^{\circ}$ is denoted by 
\begin{equation}
\label{KnuthDual}
C_{\theta}(\bm{X}):=A_{\theta^{\circ}}(\bm{X}),
\end{equation}
where $\bm{X}=(X_v)_{v\in V}$ is a set of algebraically independent variables over $R$. Setting $R=\O $ for a cDVR $\O $ with residue field of size $q$, \cite[Cor.~2.10]{RossmannVoll/24} expresses the corresponding ask zeta function in terms of a $p$-adic integral:
    \begin{equation}
    \label{Eq:AskCoker}
    \zeta_{\theta/\O}^{\text{ask}}(s)=(1-q^{-1})^{-1}\int_{\O  V\times \O }\lvert y\rvert^{s-1+\lvert W\rvert -\lvert V\rvert} \lvert \mathrm{Coker}(C_{\theta}(\bm{X}))\otimes_{\O [\bm{X}]}(\O /y)_{\bm{x}}\rvert\, \d\mu(\bm{x},y), 
    \end{equation}
    where $(\O/y)_{\bm{x}}$ denotes the $\O[\bm{X}]$-module $\O/y$, whose scalar multiplication is defined via the composition $\O[\bm{X}]\rightarrow \O\rightarrow \O/y$, where the first map is the evaluation map at $\bm{x}$, and the second map is the quotient map. Considering sizes of cokernels in the above integrand to express sizes of kernels is motivated by the right-exactness of tensor products; cf.~\cite[Rem.~2.9]{RossmannVoll/24}.

\subsection{Hyperplane arrangements}
We provide some additional prerequisites regarding hyperplane arrangements. For a detailed account of the theory of hyperplane arrangements, consult~\cite{orlik_arrangements_1992, Stanley/07}. 

Let $\A=\{H_1, \dots, H_m\}$ be an arrangement of hyperplanes defined over some field $K$.
The intersection poset of $\A$ is given by 
$$\mathcal{L}(\A)=\left\{\bigcap_{i\in I} H_i\neq \varnothing : \varnothing\neq I\subseteq [m]\right\}\sqcup \{V\}.$$
It is ordered by reverse inclusion, viz.\ for every $x,y\in \mathcal{L}(\A)$, we have $x\leq y$ if and only if $x\supseteq y$. The reverse inclusion endows the intersection poset with a rank function, which we denote by $\mathrm{rk}\colon \mathcal{L}(\mathcal{A})\rightarrow \mathbb{Z}_{\geq 0}$. The bottom element of $\mathcal{L}(\A)$ corresponds to $V$, whereas a top element exists if and only if the arrangement is central. We denote the bottom (resp. top) element by $\hat{0}$ (resp. $\hat{1}$). 

Recall the definition of the cone over $\A$, denoted by $c\A$, defined in Section~\ref{Sec:BeyondBoolean}. Let $H_0$ denote the additional hyperplane of $c\A$. For any $x\in \La(\A)$, let $cx\in \La(c\A)$ be the subspace obtained by homogenising the linear affine polynomials corresponding to $x$. Considering the non-empty central $1$--arrangement $\mathcal{B}=(\{0\}, K)$, \cite[Prop.~2.17]{orlik_arrangements_1992} provides a rank-preserving surjective map of posets $$\phi\colon \La(\A\times \mathcal{B}) \to \mathcal{L}(c\A),$$ where $\phi(x\oplus \{0\})=cx\cap H_0$ and $\phi(x\oplus \{K\})=cx$ for every $x\in \La(\A)$.
Whenever $\mathcal{A}$ is central, the map $\phi$ defines a bijection, and thus we have by \cite[Prop.~2.14]{orlik_arrangements_1992} an isomorphism of lattices $$\mathcal{L}(\A)\times \mathcal{L}(\mathcal{B})\simeq \mathcal{L}(c\A).$$ In particular, we have 
\begin{align}
\label{eq:LConeRestr}
    \La((c\A)_{c\hat{1}}) \simeq \La(\A)\times \{K\} &&\text{and}
    &&\mathcal{L}((c\A)^{H_0}) \simeq \mathcal{L}(\A)\times\{0\}.
\end{align}

The order complex of $\La(\A)$ is denoted by $\Delta(\La(\A))$; cf. \cite[Chap.~8.4]{petersen_eulerian_2015}. For any non-empty flag $F=(x_1<\dots <x_l)\in \Delta(\La(\A))$, we set $\min(F):= x_1$ and $\max(F):=x_l$. Moreover, we write $$cF:=(cx_1<\dots <cx_l)\in \Delta(\La(c\A)),$$ and $$cF\cap H_0:=(cx_1\cap H_0<\dots <cx_l\cap H_0)\in \Delta(\La(c\A)).$$ For the empty flag $F=\varnothing$, we set $\min(F):=\max(F):=\varnothing$. The concatenation of ``compatible flags" is explained in the following definition. 
\begin{definition}
    Let $F_1=(x_1<\dots < x_l)$ and $F_2=(y_1<\dots <y_m)$ be flags in $\Delta(\La(\A))$ such that $\max(F_1)\leq \min(F_2)$. The concatenation of $F_1$ and $F_2$ is the flag $$F_1\Vert F_2 = (x_1<\dots <x_l\leq y_1<\dots <y_m)\in \Delta(\La(\A)).$$
\end{definition}
For central arrangements, the isomorphisms of posets (\ref{eq:LConeRestr}) enables one to consider a concatenation of flags in the intersection lattice of the cone over $\A$:
\begin{definition}
\label{Def:ConeFlagConc}
    Let $\A$ be a central hyperplane arrangement and let $F_1, F_2\in \Delta(\La(A))$ such that $\max(F_1)\leq \min(F_2)$. The cone concatenation of $F_1$ and $F_2$ is 
    $$F_1\Vert_{c\A} F_2 = cF_1 \Vert (cF_2\cap H_0) \in \Delta(\La(c\A)).$$
\end{definition}

We set $\tL(\A):= \La(\A)\backslash\{\hat{0}\}$. Going back to the assumptions of Section~\ref{Sec:FlagHilbert},
the analytic zeta function of $\A$ and the flag Hilbert--Poincar\'e series of $\A$ are related by the following result.
\begin{theorem}[{\cite[Thm.~B]{MaglioneVoll/24}}]
\label{ThmfHP}
Let $\A$ be a hyperplane arrangement defined over a number field $K$. For any $x\in \tL(\A)$, let 
\begin{equation}
\label{Eq:gFct}
g_x(\bm{s})=\mathrm{rk}(x)+\sum_{\hat{0}\neq y\leq x}s_y.
\end{equation}
If $\O $ is a cDVR with an $\mathcal{O}_K$-module structure with finite residue field of cardinality $q$ such that $\A$ has good reduction over $\mathbb{F}_q$, then $$\zeta_{\A(\O )}(\bm{s})=\mathsf{fHP}_{\A}\left(-q^{-1}, \left(q^{-g_x(\bm{s})}\right)_{x\in \tL(\A)}\right).$$
\end{theorem}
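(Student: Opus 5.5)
The plan is to prove the formula by a recursion over flats, obtained by stratifying $\O^{d(\A)}$ according to the reduction modulo $\mathfrak{p}$. Since $\A$ has good reduction over $\mathbb{F}_q$, we identify $\La(\A)$ with $\La(\A(\O/\mathfrak{p}))$. Every $\bar a\in\mathbb{F}_q^{d(\A)}$ lying on some hyperplane of $\A(\O/\mathfrak{p})$ then lies in a unique flat $x\in\La(\A)$ that is maximal among the flats containing it. The residue class $a+\mathfrak{p}^{d(\A)}$ therefore only sees the hyperplanes through $x$. Concretely, $\lVert\A_y\rVert\equiv 1$ on that class unless $y\le x$, in which case $\A_y\subseteq\A_x$. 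Points lying on no hyperplane contribute with integrand $1$.

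\textbf{Step 1 (counting strata).} The number of $\bar a\in\mathbb{F}_q^{d(\A)}$ whose maximal flat is exactly $x$ (with $x=\hat 0$ meaning the complement of all hyperplanes) is the number of points of $x(\mathbb{F}_q)$ off the restriction $\A^x$. By Möbius inversion on $\La(\A^x)$ together with good reduction, this count equals $q^{\dim x}\,\pi_{\A^x}(-q^{-1})$.

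\textbf{Step 2 (localisation and homogeneity).} For such $\bar a$, translate the residue class to the origin and choose coordinates adapted to $x$: the first $\mathrm{rk}(x)$ coordinates are transverse, the remaining ones lie along $x$. The integral over the class becomes $q^{-\dim x}$ times $q^{-\mathrm{rk}(x)}$ times the integral over $\O^{\mathrm{rk}(x)}$ of $\prod_{\hat0\neq y\le x}\lVert \A_y\rVert^{s_y}\,|\pi|^{s_y}$. Here the arrangement is the essentialised central arrangement $\A_x$, and each factor $|\pi|^{s_y}$ arises because the relevant linear forms are homogeneous of degree one. This yields $\zeta_{\A(\O)}(\bm s)=\sum_{x}\pi_{\A^x}(-q^{-1})\,q^{-g_x(\bm s)}\,\zeta^{\circ}_{\A_x}$, where $\zeta^\circ_{\A_x}$ is the zeta function of the central arrangement $\A_x$ in $\mathrm{rk}(x)$ variables, with the term for $x=\hat 0$ read as $\pi_{\A}(-q^{-1})$. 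The same decomposition applied to the central arrangement $\A_x$ itself produces the stratum $\bar a=0$, whose contribution is $q^{-g_x(\bm s)}\zeta^\circ_{\A_x}$. Solving gives
\[
\zeta^\circ_{\A_x}=\frac{1}{1-q^{-g_x(\bm s)}}\sum_{y<x}\pi_{\A_x^{y}}(-q^{-1})\,q^{-g_y(\bm s)}\,\zeta^\circ_{\A_y}.
\]

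\textbf{Step 3 (unfolding).} Iterate this recursion along descending chains $x_l>\dots>x_1>\hat 0$ in $\tL(\A)$. Each step contributes a factor $T_x/(1-T_x)$ with $T_x=q^{-g_x(\bm s)}$, together with the interval Poincaré polynomial $\pi_{\A_{x_{i+1}}^{x_i}}(-q^{-1})$. The outermost layer contributes $\pi_{\A^{x_l}}$, which corresponds to the convention $x_{l+1}=\varnothing$, and the innermost layer contributes $\pi_{\A_{x_1}}$, which corresponds to $x_0=\hat 0$. The resulting sum over flags is exactly $\mathsf{fHP}_{\A}(-q^{-1},(q^{-g_x(\bm s)})_x)$. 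To match the $T_x/(1-T_x)$ expansion with the recursion's $1/(1-T_x)$ factors, note that a flat with zero-measure contribution at the origin is precisely a flat that is either in the flag or not, so the two expansions agree.

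The main obstacle is Step 2. One has to check carefully that, after reduction, the maximal norms $\lVert\A_y\rVert$ on a residue class really factor as $|\pi|$ times the norm of the corresponding linear forms of $\A_x$ in the transverse coordinates. This requires good reduction, so that the linear forms through $x$ span a saturated sublattice of rank $\mathrm{rk}(x)$, which allows the change of coordinates to be unimodular over $\O$. One also has to keep the bookkeeping of exponents $\mathrm{rk}(x)+\sum_{y\le x}s_y$ consistent.
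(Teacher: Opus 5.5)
The paper gives no proof of this statement. It is quoted from Maglione--Voll and used as a black box, in the proofs of Theorem~\ref{Thm:mfHPandfHP} and Corollary~\ref{CorollarymfHP}.

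Your argument is a correct, self-contained proof. You stratify by the flat cut out by the hyperplanes through $\bar a$, count strata by the finite-field method, rescale each residue class to the central arrangement $\A_x$ in $\mathrm{rk}(x)$ variables, and isolate the self-similar contribution of the origin. This gives exactly the recursion whose unfolding is $\mathsf{fHP}_{\A}$. The bookkeeping $q^{\dim x}\pi_{\A^x}(-q^{-1})\cdot q^{-d(\A)}\cdot q^{-\sum_{\hat 0\neq y\leq x}s_y}=\pi_{\A^x}(-q^{-1})\,q^{-g_x(\bm s)}$ is right.

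Two points you leave implicit deserve a line each:
\begin{itemize}
\item Translating a residue class to the origin needs a lift of $\bar a$ lying in $x(\O)$. It exists because you can choose $\mathrm{rk}(x)$ hyperplanes through $x$ whose linear parts have linearly independent reductions. The corresponding square system is then invertible over $\O$, and its solution reduces to the relevant coordinates of $\bar a$.
\item Iterating the recursion needs each $\A_x$ to have good reduction again. This holds because the good-reduction isomorphism restricts to the interval $[\hat 0,x]$.
\end{itemize}

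The last sentence of Step~3 is confused and can be dropped. The factor $T_y$ attached to a stratum combines with the $1/(1-T_y)$ obtained by solving the recursion for $\zeta^\circ_{\A_y}$. The unfolding stops at $y=\hat 0$, where $T_{\hat 0}=\zeta^\circ_{\A_{\hat 0}}=1$.

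The other natural route is Denef's formula applied to the maximal De Concini--Procesi model. The paper itself uses that model's numerical data, $\nu_t=\mathrm{rk}(t)$ and $N_{tx}=1$ for $x\leq t$, in Proposition~\ref{Prop:LocalPoles}. Your argument needs no resolution and no point counts of strata of a blow-up, only the linearity and homogeneity of the forms. The resolution route, by contrast, still controls the candidate poles at the finitely many places of bad reduction, where your residue-class stratification no longer yields a clean recursion.
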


\section{Ask zeta functions associated with central hyperplane arrangements}
\label{Sec:AskHyp}
Let $R$ be an integral domain and set $K:=\mathrm{Frac}(R)$. Let $\A$ be a central hyperplane arrangement of dimension $d(\A):= \dim(\A)$ defined by a fixed collection of polynomials $$\{L_H(X_1, \dots, X_{d(\A)})\in R[\bm{X}]: H\in \A\}.$$ In this section, given a tuple of nonnegative integers $\bm{\mu}=(\mu_x)_{x\in\La(\A)}$, we construct a module representation, denoted by $\eta_{\A}^{\bm{\mu}}$, which encodes the various incidence relations within $\La(\A)$. In the event that $R=\O$ is a cDVR and an $\mathcal{O}_K$-algebra, our construction provides us with the tools required to prove Theorem~\ref{ThmMainAskIgusa}.

\subsection{Incidence representations} 
Imitating the construction of incidence representations associated with hypergraphs from \cite[Sec.~3.2]{RossmannVoll/24}, we introduce (intersection weighted) \emph{incidence representations} of central hyperplane arrangements. We may assume, without loss of generality, that the linear forms associated with the hyperplanes in $\A$ have coefficients in $R$, viz.\ $L_H\in R[\bm{X}]$ for every $H\in \A$.
\begin{definition}
\label{DefIntWeighIncRep}
    Let $E$ be a finite set and let $\bm{\mu}\colon E\rightarrow \mathcal{L}(\A)$ be an arbitrary set function. Considering the incidence set $${I}(\A, \bm{\mu}):=\{(H, e)\in \A\times E: H\in \A_{\bm{\mu}(e)} \}, $$
  the (intersection-weighted) incidence representation of $\A$ is defined by $$ \eta_{\A}^{\bm{\mu}}\colon \begin{array}{rcl} R{I}(\A, \bm{\mu})&\to &\mathrm{Hom}(R^{d(\mathcal{A})}, R E) \\ (H, e)&\mapsto &(\bm{z}\mapsto L_H(\bm{z})\cdot e) \end{array} .$$
\end{definition}
\begin{remark}
\label{Rem:MatrixLinFormsA}
    Let $\bm{Z}=(Z_{(H, e)})_{(H,e)\in {I}(\A, \bm{\mu})}$ be a set of algebraically independent variables over $R$, and let $\{e_1, \dots, e_{d(\A)}\}$ denote the chosen basis of $R^{d(\A)}$. We denote the matrix of linear forms associated with $\eta_{\A}^{\bm\mu}$ by $$A_{\A}^{\bm\mu}(\bm{Z})\in M_{d(\A)\times \lvert E\rvert }(R[\bm{Z}]);$$ cf. equation~(\ref{eq:MatrixLinForms}). In particular, for any $(i,e)\in [d(\A)]\times E$, we have 
    $$\left(A_{\A}^{\bm\mu}(\bm{Z})\right)_{(i,e)}= \sum_{H\in \mathcal{A}_{\bm\mu(e)}} L_{H}(e_i) Z_{(H, e)}.$$
\end{remark}
The map $\bm\mu\colon E \rightarrow \La(\A)$ in Definition~\ref{DefIntWeighIncRep} is entirely determined by $\mathcal{L}(\mathcal{A})$ and, for each element $x\in \La(\mathcal{A})$, an ``intersection multiplicity" $\mu_x\in \mathbb{Z}_{\geq 0}$ that counts the size of the preimage of $x$ under the map $\bm\mu$:
\begin{equation*}
\mu_x:=\#\{e\in E: \mu(e)=x\}.
\end{equation*}
We note that such intersection multiplicities are analogous to the hyperedge multiplicities associated with hypergraphs; cf. \cite[Def.~3.1]{RossmannVoll/24}. When confusion is unlikely, we often denote a set map $E\rightarrow \La(\A)$, and its corresponding intersection multiplicities, by the same symbol $\bm{\mu}$.

\begin{example}
\label{Ex:BooleanRepr}
    The Boolean hyperplane arrangement of rank $n$, denoted by $\mathrm{A}_1^n$, is comprised of all coordinate hyperplanes in $K^n$. In view of \cite[Ex.~2.8]{orlik_arrangements_1992}, we identify $\La(\mathrm{A}_1^n)$ with the Boolean algebra $B_n$ of all subsets of $[n]$. Given a finite set $E$ and a set function $\lvert \cdot \rvert\colon E\rightarrow B_n$, for any $1\leq i\leq n$ and any $e\in E$, we have  
    $$\left(A_{\mathrm{A}_1^n}^{\lvert\cdot \rvert}(\bm{Z})\right)_{(i, e)}=\begin{cases}
    Z_{(i,e)} &\text{if } i\in \lvert e\rvert, \\ 
    0 &\text{otherwise.}
\end{cases}$$
In particular, in accordance with the notation in \cite[Sec.~3]{RossmannVoll/24}, it follows by construction that the matrix of linear forms $A_{\mathrm{A}_1^n}^{\lvert\cdot \rvert}(\bm{Z})$ coincides with $M_{\mathsf{H}}(\bm{Z})$, where $\mathsf{H} =([n], E, \lvert\cdot \rvert)$ is the hypergraph on $n$ vertices with hyperedge set $E$, and where $\lvert \cdot \rvert\colon E\rightarrow B_n$ plays the role of the support function. 
\end{example}

\begin{example}
    Recall the matrix of linear forms $A_{\operatorname{A}_3}^{\bm{\mu}}(\bm{Z})$ constructed in Example~\ref{ExampleBraidMatrix}.
    The matrix of linear forms associated with its Knuth dual is denoted by $C_{\eta_{\mathcal{A}}^{\bm\mu}}(\bm{X})$, which is of size $14\times 5$; cf. equation~(\ref{KnuthDual}). The rows are indexed by the incidence relations corresponding to the non-dashed edges in Figure \ref{Fig:IntLatticeBraid}, and the columns are indexed by the set $\{123, 124, 12|34, 134, 234\}$. Any entry is either zero, or the linear form determined by the corresponding hyperplane:
    $$ C_{\eta_{\mathcal{A}}^{\bm\mu}}(\bm{X})=\begin{pmatrix}
         X_1-X_2 & 0 & 0 & 0 & 0\\
         X_1-X_3 & 0 & 0 & 0 & 0\\
         X_2-X_3 & 0 & 0 & 0 & 0 \\ 
         0 & X_1-X_2 & 0 & 0 & 0 \\
         0 & X_1-X_4 & 0 & 0 & 0 \\
         0 & X_2-X_4 & 0 & 0 & 0 \\
         0 & 0 & X_1-X_2 & 0 & 0 \\
         0 & 0 & X_3-X_4 & 0 & 0 \\
         0 & 0 & 0 & X_1-X_3 & 0 \\
         0 & 0 & 0 & X_1-X_4 & 0 \\
         0 & 0 & 0 & X_3-X_4 & 0 \\
         0 & 0 & 0 & 0 & X_2-X_3 \\
         0 & 0 & 0 & 0 & X_2-X_4 \\
         0 & 0 & 0 & 0 & X_3-X_4 
    \end{pmatrix}.
    $$
\end{example}

\subsection{Ask zeta functions of incidence representations}
Recall that in the setting of Theorem~\ref{ThmMainAskIgusa}, we have an arrangement $\A$ defined over a cDVR and $\mathcal{O}_K$-algebra $\O$, and $\bm{\mu}=(\mu_x)_{x\in \La(\A)}$ denotes a vector of nonnegative integers.
\begin{proof}[Proof of Theorem~\ref{ThmMainAskIgusa}]
    Let $C_{\eta_{\mathcal{A}}^{\bm\mu}}(\bm{X})$ be the matrix of linear forms associated with the Knuth dual of $\eta_{\A}^{\bm\mu}$; cf. equation~(\ref{KnuthDual}). In view of equation (\ref{Eq:AskCoker}), we have
    \begin{equation}
    \label{EquationHypPadicIntegral}
    (1-q^{-1})\ZA=\int_{\O^{d(\mathcal{A})+1}} \lvert y\rvert^{s-1+m-d(\A)}\left\lvert \mathrm{Coker}\left(C_{\eta_\A^{\bm\mu}}(\bm{X})\right)\otimes_{\O[\bm{X}]} \left(\frac{\O}{y}\right)_{\bm{z}}\right\rvert \d\mu(\bm{z}, y).
    \end{equation}
    Consider the finite set 
    $$E_{\bm\mu}:=\left\{(x, i)\ |\ x\in \La(\mathcal{A}), i\in \{1, \dots, \mu_x\} \right\}$$ endowed with the projection map $\pi_1\colon E_{\bm\mu}\rightarrow \La(\mathcal{A})$. In light of \cite[Lem.~2.3]{RossmannVoll/24}, we have $$\mathrm{Coker}\left(C_{\eta_\A^{\bm\mu}}(\bm{X})\right)=\frac{\O[\bm{X}] E_{\bm\mu}}{\langle L_H(\bm{X}) e : e\in E_{\bm\mu}, H\in \A_{\pi_1(e)}\rangle},$$ which is isomorphic as an $\O[\bm{X}]$-module to 
    \begin{equation*}
    \bigoplus_{e\in E_{\mu}} \frac{\O[\bm{X}]}{\langle L_H(\bm{X}): H\in \A_{\pi_1(e)}\rangle} \simeq \bigoplus_{x\in \La(\A)} \left(\frac{\O[\bm{X}]}{\langle L_H(\bm{X}): H\in \A_x\rangle} \right)^{\mu_x}.
    \end{equation*}
    For any ring homomorphism $\lambda\colon R\rightarrow S$ and any ideal $I\leq R$, we have an isomorphism of $R$-modules $R/I\otimes_R S\simeq S/\lambda(I)S$; cf. \cite[Chap.~2]{AtiyahMacdonald/69}. In particular, the module appearing in the integrand of the right-hand side of equation~(\ref{EquationHypPadicIntegral}) is
    \begin{equation}
    \label{IntModuleA}
    \bigoplus_{x\in \La(\A)} \left(\frac{\O[\bm{X}]}{\langle L_H(\bm{X}): H\in \A_x\rangle} \right)^{\mu_x} \otimes_{\O [\bm{X}]}\left(\frac{\O }{y}\right)_{\bm{z}} \simeq \bigoplus_{x\in \La(\mathcal{A})} \left(\frac{\O }{\langle y, L_H(\bm{z}) \ \vert \  H\in \A_x\rangle}\right)^{\mu_x}.
    \end{equation}
    Combining equations (\ref{EquationHypPadicIntegral}) and (\ref{IntModuleA}) therefore yields
    \begin{align*}
     (1-q^{-1})&\ZA \\ =&\int_{\O^{\d(\mathcal{A})}\times\O } \lvert y\rvert^{s-1+m-d(\A)} \prod_{x\in \La(\A)} \lVert \left\{y, L_H(\bm{z}): H\in \mathcal{A}_x\right\} \rVert^{-\mu_x}\ \d\mu(\bm{z}, y) \nonumber \\  = &\int_{\O^{d(\mathcal{A})}\times\O } \prod_{\underline{x}\in \tL(c\A)} \lVert (c\A)_{\underline{x}}\rVert^{f_{d(\A), \underline{x}}(s,m,\bm{\mu})}\ \d\mu(\bm{z}, y) \nonumber \\ =&\zeta_{c\A(\O)}\left( (f_{d(\A), \underline{x}}(s, m, \bm{\mu}))_{\underline{x}\in \tL(c\A)}\right), \nonumber
    \end{align*}
    which concludes the proof.
    \end{proof}

    \begin{remark}
    \label{RemarkA1andHypergraph}
    Specialising Theorem~\ref{ThmMainAskIgusa} to the Boolean arrangement $\mathrm{A}_1^n$ recovers \cite[Prop.~4.10]{MaglioneVoll/24}.
    Specifically, for any subset $J\subseteq [n+1]$, consider the multivariate polynomial
\begin{equation*}
f_{n, J}(X, Y, (Z_I)_{I\subseteq [n]}):=\begin{cases}
    X+Y-Z_{\varnothing}-n-1 &\text{if $\{n+1\}=J, $} \\
    -Z_{J\backslash \{n+1\}} &\text{if $\{n+1\}\subsetneq J$,} \\
    0 &\text{otherwise.}
\end{cases}
\end{equation*}
In view of Example~\ref{Ex:BooleanRepr}, for any hypergraph $\mathsf{H}$ on $n$ vertices with hyperedge multiplicities $\bm{\mu}=(\mu_I)_{I\subseteq [n]}$, the matrices of linear forms $M_{\mathsf{H}}(\bm{Z})$ and $A_{\mathrm{A}_1^n}^{\bm{\mu}}(\bm{Z})$ coincide. 
Let $\O$ be a cDVR with residue field of size $q$. Upon identifying $c\mathrm{A}_1^n$ with $\mathrm{A}_1^{n+1}$, applying Theorem~\ref{ThmMainAskIgusa} yields
\begin{equation*}
\zeta_{M_\mathsf{H}/\O }^{\mathrm{ask}}(s)=(1-q^{-1})^{-1}\zeta_{\mathrm{A}_1^{n+1}(\O )}((f_{n, J}(s, m, (\mu_I)_{I\subseteq [n]})_{\varnothing \neq J\subseteq [n+1]}),
\end{equation*}
where $m=\sum_{I\subseteq [n]} \mu_I$.
    \end{remark}

\section{Truncated flag Hilbert--Poincar\'e series}
\label{SectionfHPCone}
Our main focus in this section is to prove Theorem~\ref{Thm:mfHPandfHP} and Corollary~\ref{CorollarymfHP}. Along the way, we work out the example of the Boolean arrangement. We assume throughout that $\A$ is a central hyperplane arrangement defined over a number field $K$. We let $d(\A)$ denote the dimension of $\A$, and $r$ denote the rank function on the intersection lattice of $\A$.

\subsection{Rank specialisations} Recall that the truncated flag Hilbert--Poincar\'e series of $\A$ is the rational function $$\mathsf{mfHP}_{\mathcal{A}}(X, (T_x)_{x\in \mathcal{L}(\A)}):=\sum_{F\in \Delta(\mathcal{L}(\mathcal{A}))}\underline{\pi}_F(X) \prod_{x\in F}\frac{T_x}{1-T_x}\in \mathbb{Q}(\bm{T})[X],$$
where $\underline{\pi}_F(X)$ denotes the truncated flag Poincar\'e polynomial; cf. Definition~\ref{Def:TruncfHP}. 
\begin{example}
\label{mfHPBoolean}
    Consider the Boolean arrangement $\operatorname{A}_1^n$. In view of \cite[Prop.~4.5]{MaglioneVoll/24}, we have  $$\mathsf{fHP}_{\operatorname{A}_1^n}(X, \bm{T})=(1+X)^n\sum_{F\in \widetilde{\operatorname{WO}}_n} \prod_{J\in F}\frac{T_J}{1-T_J},$$
    where $\widetilde{\operatorname{WO}}_n:=\Delta(2^{[n]}\backslash \{\varnothing\})$ denotes the poset of weak orders on $n$ elements, excluding the empty set. Since any subarrangement and any restriction of $\mathrm{A}_1^n$ is also Boolean, the truncated flag Hilbert--Poincar\'e series of $\operatorname{A}_1^n$ is
    $$\mathsf{mfHP}_{\operatorname{A}_1^n}(X, \bm{T})=\sum_{F\in \widehat{\operatorname{WO}}_n} (1+X)^{n-\lvert \min(F)\rvert} \prod_{J\in F} \frac{T_J}{1-T_J},$$
    where $\widehat{WO}_n:=\Delta(2^{[n]}).$
\end{example}

\begin{proof}[Proof of Theorem~\ref{Thm:mfHPandfHP}]
    Let $F^{\prime}, F\in \Delta(\La(\A))$ be flags such that $\max(F^{\prime})\leq \min(F)$. In view of Definition~\ref{Def:ConeFlagConc}, we consider the concatenation of flags
    $$F^{\prime}\Vert_{c\A} F := cF^{\prime} \Vert (cF\cap H_0) \in \Delta(\La(c\A)).$$
    In view of the isomorphisms of posets (\ref{eq:LConeRestr}), the right-hand side of equation~(\ref{Eq:substitutionsconefHP}) can be rewritten as
\begin{equation}
\label{AskAMonster}
\sum_{F\in \Delta(\mathcal{L}(\A))}\left(\sum_{\substack{F^{\prime}\in \Delta(\tL(\mathcal{A}))\\ \max(F^{\prime})\leq \min(F)}} \pi_{F^{\prime}\Vert_{c\A} F}(X)\prod_{y\in F^{\prime}}\frac{(-X)^{\operatorname{rk}(y)}}{1-(-X)^{\operatorname{rk}(y)}}\right)\prod_{x\in F}\frac{T_x}{1-T_x}.
\end{equation}
Writing $F=(x_1< \dots <x_{l_1})$ and $F^{\prime}=(y_1 < \dots < y_{l_2})$ in (\ref{AskAMonster}), we have 
\begin{align*}
\pi_{F^{\prime}\Vert_{c\A} F}(X) =& \left(\prod_{i=0}^{l_2-1}\pi_{(c\A)^{cy_{i}}_{cy_{i+1}}}(X) \right)\cdot \pi_{(c\mathcal{A})^{cy_{l_2}}_{cx_1\cap H_0}}(X)\cdot \left( \prod_{j=1}^{l_1}\pi_{(c\A)^{cx_j\cap H_0}_{cx_{j+1}\cap H_0}}(X)\right) \nonumber \\ =& \pi_{(c\mathcal{A})^{cy_{l_2}}_{cx_1\cap H_0}}(X)\cdot\left(\prod_{i=0}^{l_2-1}\pi_{\A^{y_{i}}_{y_{i+1}}}(X)\right) \cdot \left(\prod_{j=1}^{l_1}\pi_{\A^{x_j}_{x_{j+1}}}(X)\right).
\end{align*}
Since $\La((c\A)^{cy_{l_2}}_{cx_1\cap H_0}) \simeq \La(c(\A_{x_1}^{y_{l_2}}))$, applying \cite[Prop.~2.51]{orlik_arrangements_1992} yields $$\pi_{(c\mathcal{A})^{cy_{l_2}}_{cx_1\cap H_0}}(X)= (1+X) \pi_{\A_{x_1}^{y_{l_2}}}(X),$$
and thus 
$$\pi_{F^{\prime}\Vert_{c\A} F}(X)=(1+X) \pi_{\A_{\min(F)}^{\max(F^{\prime})}}(X)\frac{\pi_{F^{\prime}}(X)}{\pi_{\mathcal{A}^{\max(F^{\prime})}}(X)}\frac{\pi_{F}(X)}{\pi_{\mathcal{A}_{\min(F)}}(X)}.$$
Plugging the latter into (\ref{AskAMonster}), we get $$(1+X)\sum_{F\in \Delta(\mathcal{L}(\A))} \frac{\pi_{F}(X)}{\pi_{\A_{\min(F)}}(X)} \mathsf{fHP}_{\A_{\min(F)}}\left(X, ((-X)^{\operatorname{rk}(y)})_{y\in \tL(\mathcal{A}_{\min(F)})}\right) \prod_{x\in F} \frac{T_x}{1-T_x}.$$
To deal with the $\mathsf{fHP}_{\A_{\min(F)}}$ factor appearing in the above summand, we make use of the fact that $\A$ is defined over a field of characteristic zero. In view of the discussion carried out in Section~\ref{Sec:FlagHilbert}, $\A$ is representable over a number field $K$ with ring of integers $\mathcal{O}_K$. Suppose that $\A$ has good reduction over the finite residue field $\mathbb{F}_q$ of $\mathfrak{o}$, where $\O$ is the completion of $\mathcal{O}_K$ at some non-zero prime ideal $\mathfrak{p}\subset \mathcal{O}_K$. We have by Theorem~\ref{ThmfHP} that
$$\mathsf{fHP}_{\A_{\min(F)}}(-q^{-1}, (q^{-\operatorname{rk}(y)})_{y\in \tL(\mathcal{A}_{\min(F)})})= \zeta_{\mathcal{A}_{\min(F)}(\mathfrak{o})}(\bm{0})=\int_{\O^{d(\mathcal{A}_{\min(F)})}} 1 \ \d\mu=1.$$
This concludes the proof, since $\A$ admits good reduction over infinitely many $\mathbb{F}_q$; cf.~\cite[Prop.~5.13]{Stanley/07}.
\end{proof}

\begin{remark}
    The statement of Theorem~\ref{Thm:mfHPandfHP} does not hold for non-central arrangements, since the second isomorphism in (\ref{eq:LConeRestr}) is not applicable in the affine case. Instead, it follows from the proof of Theorem~\ref{Thm:mfHPandfHP} that
    $$(1+X)\mathsf{mfHP}_{(c\mathcal{A})^{H_0}}(X, (T_x)_{x\in \mathcal{L}((c\A)^{H_0})})=\mathsf{fHP}_{c\A}\left(X, ((-X)^{\operatorname{rk}(y)})_{y\not\geq H_0}, (T_x)_{x\geq H_0}\right).$$
\end{remark}

\begin{example}
\label{Ex:HypergraphmfHP}
    Let $\mathsf{H}$ be a hypergraph on $n$ vertices with hyperedge multiplicities $\bm\mu$. In light of Remark~\ref{RemarkA1andHypergraph}, we can make use of the truncated flag Hilbert--Poincar\'e series of the Boolean arrangement $\operatorname{A}_1^n$ as a means to derive the combinatorial formula in terms of weak orders for the rational function $W_{\mathsf{H}}(X, T)$ in \cite[Thm.~C]{RossmannVoll/24}. 
    Concretely, since Boolean arrangements are regular (see \cite[Chap.~6.6]{oxley_matroid_2011}), \emph{for any} cDVR $\O$ with residue field $\mathbb{F}_q$, combining Remark~\ref{RemarkA1andHypergraph} and Theorem~\ref{ThmfHP} yields
    $$\zeta_{M_{\mathsf{H}}/\O}^{\operatorname{ask}}(s)=(1-q^{-1})^{-1}\mathsf{fHP}_{c\mathrm{A}_1^{n}}(-q^{-1}, (q^{-g_J((f_{n, I}(s,m,\bm{\mu}))_{\varnothing\subsetneq I\subseteq [n+1]})})_{\varnothing\subsetneq J\subseteq [n+1]}).$$
    Applying Theorem~\ref{Thm:mfHPandfHP} to the right-hand side gives
$$\zeta_{M_{\mathsf{H}}/\O}^{\operatorname{ask}}(s)=\mathsf{mfHP}_{\operatorname{A}_1^{n}}(-q^{-1}, (q^{-s}q^{-\lvert J\rvert+n-m+\sum_{I\subseteq J}\mu_I})_{J\subseteq [n]}).$$
    In light of Example~\ref{mfHPBoolean}, we obtain
    \begin{align}
    \label{FormulaBooleanHyperplane}
     \zeta_{M_{\mathsf{H}}/\O}^{\operatorname{ask}}(s)=\sum_{F\in \widehat{\operatorname{WO}}_n} (1-q^{-1})^{n-\lvert\min(F)\rvert}\prod_{J\in F} \frac{q^{-\lvert J\rvert +n -m+\sum_{I\subseteq J}\mu_I}q^{-s}}{1-q^{-\lvert J\rvert +n -m+\sum_{I\subseteq J}\mu_I}q^{-s}}.
    \end{align}
    Denoting $J^c$ to be the complement of any subset $J\subseteq [n]$, we have $\lvert J^c\rvert = n-\lvert J\rvert$, whence $$-m+\sum_{I\subseteq J^c}\mu_I= -m+\sum_{I\cap J= \varnothing} \mu_I=-\sum_{I\cap J\neq \varnothing }\mu_I.$$ 
    In particular, applying the bijection $\widehat{\operatorname{WO}}_n\rightarrow \widehat{\operatorname{WO}}_n$ that sends any flag $F$ to its complement $F^c$, the right-hand side of equation~(\ref{FormulaBooleanHyperplane}) can be rewritten to recover \cite[Thm.~C]{RossmannVoll/24}: $$\zeta_{M_\mathsf{H}/\O}^{\operatorname{ask}}(s)= W_{\mathsf{H}}(q, q^{-s}),$$ where $$W_{\mathsf{H}}(X, T):=\sum_{F\in \widehat{\operatorname{WO}}_n} (1-X^{-1})^{\lvert\max(F)\rvert} \prod_{J\in F} \frac{X^{\lvert J\rvert-\sum_{I\cap J\neq \varnothing}\mu_I}T}{1-X^{\lvert J\rvert-\sum_{I\cap J\neq \varnothing}\mu_I}T}.$$
\end{example}

\subsection{Good reduction over finite fields}
Making use of the flag Hilbert--Poincaré series and its truncated analogue, we provide in this section a proof of Corollary~\ref{CorollarymfHP}. Recall that $\O$ denotes a cDVR and $\mathcal{O}_K$-algebra with residue field of size $q$. We assume that $\A$ has good reduction over $\mathbb{F}_q$, and let $\bm{\mu}=(\mu_x)_{x\in \La(\A)}$ be a tuple of nonnegative integers.
\begin{proof}[Proof of Corollary~\ref{CorollarymfHP}]
Let $m=\sum_{x\in \La(\A)}\mu_x$. Since $\A$ has good reduction over $\mathbb{F}_q$, combining Theorem~\ref{ThmMainAskIgusa} and Theorem~\ref{ThmfHP} yields 
\begin{equation*}
        \ZA=(1-q^{-1})^{-1} \mathsf{fHP}_{c\A}\left(-q^{-1}, \left(q^{-g_{\underline{y}}((f_{d(\A), \underline{x}}(s,m, \bm{\mu}))_{\underline{x}\in \tL(c\A)})}\right)_{\underline{y}\in \tL(c\A)}\right).
\end{equation*}
As in Section~\ref{Sec:BeyondBoolean}, given $\underline{y}\in \La((c\A)^{H_0})$, we let $y\in \La(\A)$ denote the corresponding element under the isomorphism (\ref{eq:LConeRestr}). In particular, we have $r_{c\A}(\underline{y})=r_{\A}(y)+1$, and thus $$d(\A)-1+r_{c\A}(\underline{y})=d(\A)-r_\A(y).$$ Combining equations (\ref{FunctionCentralA}) and (\ref{Eq:gFct}) then yields
$$g_{\underline{y}}((f_{d(\A), \underline{x}}(s,m, \bm{\mu}))_{\underline{x}\in \tL(c\A)})=\begin{cases}
    s+m-d(\A) + r_\A(y) - \sum_{x\leq y}\mu_x & \text{if } \underline{y}\gneq H_0, \\
   s+m-d(\A)-\mu_{\hat{0}} &\text{if } \underline{y}=H_0, \\ 
   r_{c\A}(\underline{y}) &\text{otherwise.}
\end{cases}
$$
Using the fact that $m-\sum_{x\leq y}\mu_x=\sum_{x\not\leq y}\mu_x$, applying Theorem \ref{Thm:mfHPandfHP} yields the desired result.
\end{proof}
\begin{remark}
\label{Rem:LocalIgusaHyp}
    Given a central arrangement $\A$ with good reduction over $\mathbb{F}_q$, we may more generally consider complex variables $s_x$ instead of intersection multiplicities $\mu_x\in \mathbb{Z}_{\geq 0}$ for every $x\in \mathcal{\La(\A)}$. Combining Theorem~\ref{Thm:mfHPandfHP} and Theorem~\ref{ThmfHP} yields
    \begin{align*}
    &(1-q^{-1})\mathsf{mfHP}_{\A}(-q^{-1}, (q^{-r(y)-1-\sum_{x\leq y}s_x})_{y\in \mathcal{L}(\A)}) \\=& \mathsf{fHP}_{c\A}(-q^{-1}, (q^{-r(y)})_{y\not\geq H_0}, (q^{-r(y)-1-\sum_{x\leq y}s_x})_{\underline{y}\geq H_0}) \nonumber \\
    =& \int_{\O^{d(\A)}\times\O} \prod_{x\in \mathcal{L}(\A)} \lVert \mathcal{A}_x; y\rVert^{s_x} \d\mu(p, y), \nonumber
    \end{align*}
    where we set $\mathcal{A}_{\hat{0}}=\varnothing$.
    Given a complex variable $s$ and intersection multiplicities $\bm{\mu}$, carrying out the substitutions   $$s_x=\begin{cases}
    s-1+m-d(\A)-\mu_{\hat{0}} &\text{if } x=\hat{0}, \\
    -\mu_x & \text{otherwise,}
    \end{cases}$$ 
    where $m=\sum_{x\in \La(\A)}\mu_x$, recovers Corollary~\ref{CorollarymfHP} in view of Theorem~\ref{ThmMainAskIgusa}.
\end{remark}

\section{Applications}
\label{S:AnalyticProperties}
Let $K$ be a number field, and let $\mathcal{V}_K$ be the set of non-Archimedean places of $K$. For any $v\in \mathcal{V}_K$, we set $K_v$ to be the $v$-adic completion of $K$, and let $\mathcal{O}_v$ denote its valuation ring. The maximal ideal of $\mathcal{O}_v$ is denoted by $\mathfrak{p}_v$, and we let $q_v$ denote the size of the (finite) residue field $\mathbb{F}_{q_v}=\mathcal{O}_v/\mathfrak{p}_v$. Given a central hyperplane arrangement $\mathcal{A}$ defined over $K$ and a tuple of nonnegative integers $\bm{\mu}=(\mu_x)_{x\in \La(\A)}$, we consider the matrix of linear forms constructed in Definition~\ref{DefIntWeighIncRep} over $\mathcal{O}_K$: 
$$A_{\A}^{\bm{\mu}}(\bm{Z})\in M_{d(\A)\times m}(\mathcal{O}_K[\bm{Z}]),$$
where $m:=\sum_{x\in \La(\A)}\mu_x$, and $d(\A):= \dim(\A)$. Throughout this section, the rank function on on the intersection lattice of $\A$ is simply denoted by $r\colon \La(\A)\rightarrow \mathbb{Z}_{\geq 0}$. 

\subsection{Local functional equations}
\label{Sec:FunEquations}
In light of Remark~\ref{Rem:LocalIgusaHyp}, we are considering, in the local framework, $p$-adic integrals of the form $$\int_{\mathcal{O}_v^{d(\A)}\times \mathcal{O}_v} \prod_{x\in \La(\A)} \lVert \A_x; y\rVert^{s_x} \ \d\mu.$$
Such local Igusa zeta functions arising from asymptotic algebra tend to satisfy functional equations under the ``inversion of the prime" $q_v\rightarrow q_v^{-1}$; cf.~\cite{Voll/10}, \cite[Rem.~1.7]{Voll/19}. Whenever such $p$-adic integrals are uniform, meaning that the local zeta function is a rational function in both $q_v^{-s_x}$ and $q_v$, the operation of inverting $q_v$ amounts to formally inverting these parameters. 

In our setting, we can make use of Corollary~\ref{Cor:mfHPSelf} to obtain the local functional equation for $Z_{\bm{\mu}_\A/\O }^{\operatorname{ask}}(T)$ under inversion of both the variable $T$ and the prime power $q_v$. Specifically, if $\mathcal{A}$ has good reduction over $\mathbb{F}_{q_v}$, we carry out, in light of Corollary~\ref{CorollarymfHP}, the substitution $$T_{\hat{1}}:=q_v^{-s}q_v^{d(\A)-r(\A)}.$$ In view of Corollary~\ref{Cor:mfHPSelf}, inverting the prime power $q_v$ yields
$$ Z_{\bm{\mu}_\A/\O }^{\operatorname{ask}}(T)\bigg\vert_{(q_v,T)\rightarrow (q_v^{-1},T^{-1})} = (-q_v^{d(\A)}T)Z_{\bm{\mu}_\A/\O }^{\operatorname{ask}}(T).$$
We remark that this local functional equation is not new: it is a particular case of \cite[Thm.~4.18]{RossmannAsk/18}, which establishes local functional equations under the inversion $q_v\rightarrow q_v^{-1}$ for general ask zeta functions.

\subsection{Analytic properties}
In this subsection, we deduce both local and global analytic properties pertaining to the ask zeta functions associated with $\A$.
\subsubsection{Local poles}
We provide a finite set of integers, defined in terms of the combinatorics of $\A$, that contains the set of all the real parts of the poles of the local ask zeta function of $A_\A^{\bm{\mu}}(\bm{Z})$. Most notably, making use of results in \cite[Sec.~4]{AvniKlopschOnnVoll/13}, we show that this set of candidate (local) poles is valid for every $v\in \mathcal{V}_K$, meaning that it does not depend on whether $\A$ admits good reduction over $\mathbb{F}_{q_v}$. Our description of the local poles will later be leveraged to deduce analytic properties of the global ask zeta functions associated with $\A$. 
\begin{proposition}
\label{Prop:LocalPoles}
    For every $v\in \mathcal{V}_K$, the real parts of the poles of $\zeta^{\operatorname{ask}}_{\bm{\mu}_{\A}/\mathcal{O}_v}(s)$ are contained in 
    $$\mathcal{P}_{\A}^{\bm{\mu}}:=\left\{ d(\A)-r(x)-\sum_{y\not\leq x}\mu_y : x\in \mathcal{L}(\A)\right\}\subseteq \left\{d(\A)-r(\A)-m, \dots, d(\A) \right\},$$
    a set of integers of cardinality at most $\min\{\lvert\La(\A)\rvert, r(\A)+m+1\}$.
\end{proposition}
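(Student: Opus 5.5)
Two cases will be treated separately: places $v$ where $\A$ has good reduction over $\mathbb{F}_{q_v}$, and the finitely many remaining places.

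\textbf{Good reduction.} Corollary~\ref{CorollarymfHP} expresses $\zeta^{\operatorname{ask}}_{\bm{\mu}_{\A}/\mathcal{O}_v}(s)$ as $\mathsf{mfHP}_{\A}(-q_v^{-1},\bm{T})$ with $T_x=q_v^{-s}q_v^{d(\A)-r(x)-\sum_{y\not\leq x}\mu_y}$. By Definition~\ref{Def:TruncfHP}, $\mathsf{mfHP}_{\A}$ is a finite sum of polynomials in $X$ times products of the factors $T_x/(1-T_x)$. So the only possible poles come from the zeros of $1-q_v^{-s+d(\A)-r(x)-\sum_{y\not\leq x}\mu_y}$. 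These zeros have real part exactly $d(\A)-r(x)-\sum_{y\not\leq x}\mu_y$, which lies in $\mathcal{P}_{\A}^{\bm{\mu}}$.

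\textbf{Bad reduction.} Here Corollary~\ref{CorollarymfHP} is unavailable, so I would work directly from Theorem~\ref{ThmMainAskIgusa}, in the form of Remark~\ref{Rem:LocalIgusaHyp}:
\[
(1-q_v^{-1})\zeta^{\operatorname{ask}}_{\bm{\mu}_{\A}/\mathcal{O}_v}(s)=\int_{\mathcal{O}_v^{d(\A)}\times\mathcal{O}_v}\prod_{x\in\La(\A)}\lVert\A_x;y\rVert^{s_x}\,\d\mu ,
\]
with $s_{\hat{0}}=s-1+m-d(\A)-\mu_{\hat{0}}$ and $s_x=-\mu_x$ for $x\neq\hat{0}$. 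I would then invoke the results of \cite[Sec.~4]{AvniKlopschOnnVoll/13}. These give a resolution-free description of integrals of products of norms of ideals generated by linear forms, via a suitable (e.g.\ De Concini--Procesi/wonderful-type) resolution. The candidate poles are indexed by the elements $\underline{x}\in\tL(c\A)$: each such element contributes the real part solving $r_{c\A}(\underline{x})+\sum_{\underline{z}\leq\underline{x}}\mathrm{Re}(s_{\underline{z}})=0$. This is the same numerical datum $g_{\underline{x}}$ as in Theorem~\ref{ThmfHP}, and it is valid regardless of whether the reduction is good.

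Next I compute these candidates, splitting by whether $\underline{x}\geq H_0$.
\begin{itemize}
\item If $\underline{x}\not\geq H_0$, all the relevant $s_{\underline{z}}$ vanish, so $g_{\underline{x}}=r_{c\A}(\underline{x})>0$. This gives no $s$-dependence and hence no pole.
\item If $\underline{x}\geq H_0$ corresponds to $x\in\La(\A)$, the computation in the proof of Corollary~\ref{CorollarymfHP} gives $g=s+m-d(\A)+r(x)-\sum_{y\leq x}\mu_y$.
\end{itemize}
Setting the second expression to zero gives $\mathrm{Re}(s)=d(\A)-r(x)-\sum_{y\not\leq x}\mu_y$.

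The main obstacle is justifying that the candidate-pole statement of \cite{AvniKlopschOnnVoll/13} applies in bad characteristic with exactly these numerical data, i.e.\ that the lattice of $c\A$ over $K$ governs the integral at every place. If the cited result only yields numerical data attached to the intersection lattice of $c\A$ over $K$, this is exactly what is needed. I would check that the norms $\lVert(c\A)_{\underline{x}}\rVert$ are generated by linear forms defined over $K$, which reduces the question to a statement in characteristic zero.

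\textbf{Inclusion and cardinality.} For the inclusion into an interval, note that $0\leq r(x)\leq r(\A)$ and $0\leq\sum_{y\not\leq x}\mu_y\leq m$. Hence every element of $\mathcal{P}_{\A}^{\bm{\mu}}$ lies in $\{d(\A)-r(\A)-m,\dots,d(\A)\}$. The cardinality is at most $\lvert\La(\A)\rvert$, since $\mathcal{P}_{\A}^{\bm{\mu}}$ is indexed by $\La(\A)$. It is also at most $r(\A)+m+1$, since that is the size of the integer interval just found.
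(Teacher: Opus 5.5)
Your proposal is correct and follows essentially the same route as the paper. At places of good reduction you use Corollary~\ref{CorollarymfHP}; at the remaining places you combine Theorem~\ref{ThmMainAskIgusa} with \cite[Cor.~4.2, Prop.~4.5]{AvniKlopschOnnVoll/13}, applied to a principalisation over $K$ of $\prod_{\underline{x}}(c\A)_{\underline{x}}$. The step you flag as the main obstacle is settled in the paper just as you suggest: it takes the maximal De Concini--Procesi wonderful model of $c\A$ over $K$, whose divisors are indexed by $\tL(c\A)$ with numerical data $\nu_t=\mathrm{rk}(t)$ and $N_{tx}=1$ if $x\leq t$ (and $0$ otherwise), so the denominators are exactly the factors $1-q^{-g_{\underline{x}}(\bm{s})}$ at every place.
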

\begin{proof}
    If $\A$ has good reduction over $\mathbb{F}_{q_v}$, writing the truncated flag Hilbert--Poincar\'e series of $\A$ over a common denominator and carrying out the substitutions described in Corollary~\ref{CorollarymfHP} yields $\mathcal{P}_{\A}^{\bm{\mu}}$. 

    Let $v\in \mathcal{V}_K$ such that $\A$ does not admit good reduction over $\mathbb{F}_{q_v}$, and set $\O:=\mathcal{O}_v$ and $q:=q_v$. On account of Theorem~\ref{ThmMainAskIgusa}, we consider the analytic zeta function $$\zeta_{\mathcal{C}(\O)}(\bm{s})=\int_{\O^{d(\mathcal{C})}}\prod_{x\in \tL(\mathcal{C})}\lVert\mathcal{C} _x\rVert^{s_x} \ \d\mu,$$
    where $\mathcal{C}=c\A$. 
    Crucially, this $p$-adic integral is of the form \cite[Eq.~(4.3)]{AvniKlopschOnnVoll/13}, where we have $I=\varnothing$. Let $(Y, h)$ be a principalisation over $K$ of the ideal $\mathcal{I}_{\mathcal{C}}=\prod_{x\in \tL(\mathcal{C})}(\mathcal{C}_x)$, with numerical data $(N_{tx}, \nu_{t})_{(t,x)\in T\times \tL(\A)}$, where $T$ is a finite index set for the set of irreducible components of $h^{-1}(\cup_{H\in \mathcal{C}} H)$; cf.~\cite[Thm.~3.2.1]{Igusa/00} for the case of prinicpal ideals, and \cite[Sec.~2]{Voll/10} for the general case. In view of \cite[Cor.~4.2]{AvniKlopschOnnVoll/13} and \cite[Prop.~4.5]{AvniKlopschOnnVoll/13}, the study of the poles of $\ZA$ is related to the analysis of the following function: $$\Xi_{U, \varnothing}(q, \bm{s})=\sum_{m\in \mathbb{N}^U} q^{-\sum_{u\in U} m_u\left( \nu_u+\sum_{x\in \tL(\mathcal{C})} s_xN_{u x}\right)} = \prod_{u\in U} \frac{q^{-\nu_u-\sum_{x\in \tL(\mathcal{C})} s_x N_{ux}}}{1-q^{-\nu_u-\sum_{x\in \tL(\mathcal{C})} s_x N_{ux}}},$$
    where $U\subseteq T$. Note that the second equality follows from \cite[Ex.~2.1]{Voll/10}. In particular, the (maximal) De Concini-Procesi wonderful model of $\mathcal{C}$ (cf.~\cite{ConciniProcesi/95})  provides a required principalisation of $\mathcal{I}_C$. This model is described combinatorially in terms of a building set $\mathcal{G}$ of $\mathcal{L}(\mathcal{C})$, along with its nested set complex $\mathcal{N}(\La(\mathcal{C}), \mathcal{G})$; see \cite[Sec.~4]{FeichtnerKozlov/04}. Taking the maximal building set $\mathcal{G}=\tL(\mathcal{C})$, it follows by definition that the nested set complex with respect to $\La(\mathcal{C})$ and $\mathcal{G}$ coincides with $\Delta(\tL(\mathcal{C}))$. Concretely, this means the irreducible components of $h^{-1}(\cup_{H\in \mathcal{C}} H)$ are indexed by $\tL(\mathcal{C})$, and they intersect if and only if their indices are linearly ordered in $\tL(\mathcal{C})$. Given $t, x\in \tL(\mathcal{C})$, the numerical data of the maximal wonderful model of $\mathcal{C}$ is given by $\nu_t=r(t)$, and $$N_{tx}=\begin{cases}
        1 &\text{if $x\leq t$}, \\
        0 &\text{otherwise.}
    \end{cases}$$
    Hence, the function $\Xi_{U, \varnothing}(q, \bm{s})$ has denominators of the form $(1-q^{-g_x(\bm{s})})$ for some $x\in \tL(\mathcal{C})$, where $g_x(\bm{s})$ is the function defined in (\ref{Eq:gFct}).
    Carrying out the substitutions described by Theorem~\ref{ThmMainAskIgusa} concludes the proof.
\end{proof}

\subsubsection{Global abscissa of convergence}
Recall that the global ask zeta function associated with $\A$ is given by the Dirichlet series 
\begin{equation*}
\ZAK=\sum_{I\subseteq \mathcal{O}_K} \operatorname{ask}_{\mathcal{O}_K/I}\left(A_{\A}^{ \bm{\mu}}(\bm{Z})\right)\lvert \mathcal{O}_K/I\rvert^{-s},
\end{equation*}
where $I$ ranges over the non-zero ideals of the ring of integers of $K$. In view of \cite[Prop.~3.4]{RossmannAsk/18}, we have the following Euler product decomposition:
\begin{equation}
\label{Eq:EulerProduct}
\ZAK=\prod_{v\in \mathcal{V}_K} \zeta^{\operatorname{ask}}_{\bm{\mu}_{\A}/\mathcal{O}_v}(s).
\end{equation}
We denote the abscissa of convergence of (\ref{Eq:EulerProduct}) by $\alpha(\A, \bm{\mu})$. We first focus on the case of so-called regular arrangements before turning our attention to arbitrary central arrangements.

A central arrangement $\A$ is regular if it is representable over every field; see \cite[Chap.~6.6]{oxley_matroid_2011}. Equivalently, we let $\A$ be a totally unimodular arrangement, meaning that $\A$ is represented by a matrix with entries in $\mathbb{R}$, such that every minor lies in the set $\{-1,0,1\}$; cf.~\cite[Thm.~6.6.3]{oxley_matroid_2011}. The class of regular arrangements includes, in particular, all Boolean and graphical arrangements. It follows by definition that regular arrangements admit good reduction over every finite field $\mathbb{F}_q$, which enables one to make use of Corollary~\ref{CorollarymfHP} to study analytic properties of the global ask zeta function. Adapting the proof of \cite[Thm.~5.27]{RossmannVoll/24} to our setting, we show that (\ref{Eq:EulerProduct}) admits global analytic properties analogous to the ask zeta functions associated with hypergraphs.
\begin{proposition}
\label{Prop:AbsRegular}
Let $\A$ be a regular arrangement defined over a number field $K$. Let $\bm{\mu}=(\mu_x)_{x\in \La(\A)}$ be a tuple of nonnegative integers. The global abscissa of convergence of $\zeta^{\operatorname{ask}}_{A_{\A}^{\bm{\mu}}/\mathcal{O}_K}(s)$ is a positive integer. It satisfies $$ \alpha(\A, \bm{\mu})=\max\left\{d(\A)+1-r(x)-\sum_{y\not\leq x}\mu_y : x\in \La(\A)\right\},$$ and is independent of the number field $K$.
\end{proposition}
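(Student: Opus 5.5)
The plan is to use the Euler product (\ref{Eq:EulerProduct}) together with the explicit local formula of Corollary~\ref{CorollarymfHP}. Regular arrangements have good reduction over every $\mathbb{F}_{q_v}$, so this formula applies at every place. Set
$$\beta:=\max_{x\in\La(\A)}\Bigl\{d(\A)+1-r(x)-\sum_{y\not\leq x}\mu_y\Bigr\}.$$
The aim is to show that the product converges for $\mathrm{Re}(s)>\beta$ and diverges at some real $s<\beta$ arbitrarily close to $\beta$. Since each factor is a rational function of $q_v$ and $q_v^{-s}$ given by one and the same expression in the variables $X=-q_v^{-1}$ and $T_y=q_v^{-s}q_v^{a_y}$, with $a_y:=d(\A)-r(y)-\sum_{x\not\leq y}\mu_x$, we can follow the standard argument of \cite[Thm.~5.27]{RossmannVoll/24}. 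That argument is a Dirichlet-series version of du~Sautoy--Grunewald for Euler products of uniform rational functions.

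\textbf{Step 1 (expansion).} Expand $\mathsf{mfHP}_\A$ as $1$ plus a sum of monomials in $X$ and the $T_y$. The flag $F=\varnothing$ contributes $\underline{\pi}_\varnothing(X)=\pi_{\A}(X)$, because $\pi_F=\pi_{\A^{\hat0}_{\varnothing}}=\pi_\A$ and $\min(\varnothing)=\varnothing$. Here $\pi_\A(-q^{-1})=1+O(q^{-1})$. For each $y$, the term from the singleton flag $F=\{y\}$ has leading monomial $T_y$ with coefficient $\underline{\pi}_{\{y\}}(X)=\pi_{\A^y}(X)$, whose constant term is $1$. Every other term is a monomial $X^{j}\prod_y T_y^{k_y}$ with either $j\geq1$ or $\sum k_y\geq2$. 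After substitution, the local factor becomes
$$1+\sum_{y}q_v^{a_y-s}+\sum_{\text{terms}} c\, q_v^{-j+\sum_y k_y(a_y-s)}.$$
In the same way, one handles the factor $(1+X)^{-1}$ implicit through the $(1-q^{-1})$ normalisation.

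\textbf{Step 2 (upper bound).} For real $\sigma>\beta$, each monomial with $\sum k_y=k\geq1$ is bounded by $q_v^{-j+k(\beta-1-\sigma)}\leq q_v^{-(\sigma-\beta+1)}$. Hence the sum of $|\text{factor}-1|$ over $v$ is dominated by a constant times $\zeta_K(\sigma-\beta+1)$ plus $\sum_v q_v^{-1-\epsilon}$-type terms. The monomials with $k=0$ come only from $\pi_\A(-q_v^{-1})-1$. These need care: their sum $\sum_v q_v^{-1}$ diverges. I would handle them by noting that the full local factor at $s\to\infty$ tends to $1$, which forces the $k=0$ part of the expansion (the $\pi_\A$ constant) to cancel against the $(1-q^{-1})$-normalisation. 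Equivalently, one factors out a finite product of shifted Dedekind zeta functions that have no effect on convergence to the right of $\beta$. Checking this bookkeeping is the main obstacle, and I would first try to verify directly that $\ZA$ at $T_y=0$ equals $1$.

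\textbf{Step 3 (lower bound).} Choose $x$ attaining the maximum. The linear term $q_v^{a_x-s}$ has positive coefficient and $a_x=\beta-1$. Among the monomials with $j=0$ the coefficients are nonnegative, because they come from products of $\frac{T}{1-T}$; so for real $s$ the factor is at least $1+q_v^{\beta-1-s}-O(q_v^{-1-\delta})$. Therefore the product diverges for real $s$ with $\beta-1<s-\ldots$, namely as $s\downarrow\beta$, by comparison with $\zeta_K(s-\beta+1)$, which has a pole at $s=\beta$. So $\alpha(\A,\bm\mu)=\beta$, an integer independent of $K$. Finally, taking $x=\hat1$ gives $\sum_{y\not\leq\hat1}\mu_y=0$ and hence $\beta\geq d(\A)+1-r(\A)\geq1$, so $\alpha$ is positive.
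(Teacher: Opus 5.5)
Your overall strategy works, and it takes a more hands-on route than the paper.

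\textbf{Comparison with the paper.} The paper puts $W_\A(X,T)$ over the common denominator $(1-X^{d(\A)-r(\A)}T)\prod_{x\neq\hat 1}(1-X^{A_x}T)$. It identifies the Euler product of that denominator with $\zeta_K(s-d(\A)+r(\A))\prod_{x\neq\hat1}\zeta_K(s-A_x)$ to read off the candidate abscissa. It then defers the control of the numerator to the proof of \cite[Thm.~5.27]{RossmannVoll/24}.

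You estimate directly instead. For the upper bound, $|T_y|\leq q_v^{\beta-1-\sigma}$ and $\underline{\pi}_F(0)=1$ give $|W_\A(q_v,q_v^{-s})-1|\leq C q_v^{\beta-1-\sigma}$, and comparison with $\zeta_K(\sigma-\beta+1)$ gives absolute convergence for $\mathrm{Re}(s)>\beta$. For the lower bound, you use the singleton flag at a maximiser together with nonnegativity of the $X^0$-part.

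Your version is self-contained and shows exactly where positivity enters. The common-denominator form is what one would need to go further (e.g.\ continuation past the abscissa), but it is not needed for the abscissa itself. You can also test divergence at $s=\beta$ itself. There every local factor is at least $1+q_v^{-1}-Cq_v^{-2}$, and since all coefficients are nonnegative, the Euler product equals the Dirichlet series in $[0,\infty]$ for real $s$.

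\textbf{The error in Step~1.} Step~1 contains a definitional error, and it is what produces the ``main obstacle'' of Step~2. The empty flag contributes $\underline{\pi}_\varnothing(X)=\pi_\A(X)/\pi_{\A_{\min(\varnothing)}}(X)=\pi_\A(X)/\pi_{\A_\varnothing}(X)=1$, by the convention $\A_\varnothing:=\A$. You divided out the first factor for the singleton flags but not for $\varnothing$. This is why the paper's expression for $W_\A$ starts with $1+\cdots$.

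Likewise, Corollary~\ref{CorollarymfHP} has no $(1-q^{-1})^{\pm1}$ prefactor. The one in Theorem~\ref{ThmMainAskIgusa} has already been cancelled by the factor $1+X$ in Theorem~\ref{Thm:mfHPandfHP}. So the only $T$-free monomial is the constant $1$, and nothing needs to ``cancel against the normalisation''.

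Your proposed sanity check would reveal this: the local factor at $T=0$ equals $\mathrm{ask}_{\O/\O}=1$. But the error should be fixed at its source rather than hedged, because the phantom terms would be fatal. They would contribute roughly $-\lvert\A\rvert q_v^{-1}$ to every local factor. That would destroy the summability in Step~2 and the lower bound $1+q_v^{\beta-1-s}-O(q_v^{-1-\delta})$ in Step~3. With $\underline{\pi}_\varnothing=1$, your argument goes through and proves the proposition.
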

\begin{proof}
    Consider the bivariate rational function 
    \begin{equation}
    \label{eq:BivariateWA}
    W_{\A}(X, T):= \mathsf{mfHP}_{\A}\left(-X^{-1}, (X^{d(\A)-r(y)-\sum_{x\not\leq y}\mu_x}T)_{y\in \La(\A)}\right)\in \mathbb{Q}(X, T),
    \end{equation}
    so that by Corollary~\ref{CorollarymfHP}, we have $\zeta^{\operatorname{ask}}_{\bm{\mu}_\A/\mathcal{O}_v}(s)=W_{\A}(q_v, q_v^{-s})$ for every $v\in \mathcal{V}_K$. Since $\A$ is central, equation~(\ref{eq:mfHPcentral}) shows that
    $$
    W_{\A}(X, T)=\frac{1}{1-X^{d(\A)-r(\A)}T}\left(1+\sum_{\substack{F\in \Delta(\La(\A)\backslash\{\hat{1}\}) \\ F\neq \varnothing }} \underline{\pi}_F(-X^{-1})\prod_{x\in F} \frac{X^{A_{x}}T}{1-X^{A_{x}}T}\right),$$
    where $\underline{\pi}_F(X)\in \mathbb{Z}[X]$ with $\underline{\pi}_F(0)=1$, and $A_{x}\in \mathcal{P}_{\A}^{\bm{\mu}}$; cf. Proposition~\ref{Prop:LocalPoles}. Writing $W_{\A}(X, T)$ over a common denominator yields
    $$\frac{\prod_{x\in \La(\A)\backslash\{\hat{1}\}}(1-X^{A_x}T) +\sum_{F} \underline{\pi}_F(-X^{-1}) \prod_{x\in F}X^{A_{x}}T\prod_{y\notin F} (1-X^{A_{y}}T)}{(1-X^{d(\A)-r(\A)}T)\prod_{x\in \La(\A)\backslash\{\hat{1}\}}(1-X^{A_{ij}}T)}.$$
    Let $\zeta_K(s)$ denote the Dedekind zeta function of $K$. The Euler product
    $$\prod_{v\in \mathcal{V}_K} \frac{1}{\prod_{x\in \La(\A)}(1-q_v^{A_{x}}q_v^{-s})}=\zeta_K(s-d(\A)+r(\A))\prod_{x\in \La(\A)\backslash\{\hat{1}\}}\zeta_K(s-A_{x}).$$
    has abscissa of convergence $$\alpha:=\max\left(d(\A)-r(\A)+1, \max\{A_{x}+1 : x\in \tL(\A)\} \right)\leq d(\A)+1.$$ The remainder of the proof is identical, mutatis mutandis, to that of \cite[Thm.~5.27]{RossmannVoll/24}.
\end{proof}

We now consider $\A$ to be a central hyperplane arrangement (possibly non-regular) defined over $K$. In view of \cite[Prop.~5.13]{Stanley/07}, $\A$ has good reduction over $\mathbb{F}_{q_v}$ for all but finitely many places $v\in \mathcal{V}_K$. We let $S_{\A}\subset \mathcal{V}_K$ denote the finite set of places with bad reduction.

\begin{proof}[Proof of Theorem~\ref{Thm:GlobAnalProp}]
    Combining equation~(\ref{Eq:EulerProduct}) and Corollary~\ref{CorollarymfHP}, we have $$\ZAK= \prod_{v\in \mathcal{V}_K\backslash S_\A}W_{\A}(q_v, q_v^{-s}) \prod_{w\in S_\A}\zeta^{\operatorname{ask}}_{\bm{\mu}_\A/\mathcal{O}_w}(s) ,$$
    where $W_{\A}(X, T)\in \mathbb{Q}(X, T)$ is the rational function defined by (\ref{eq:BivariateWA}).
    On account of Proposition~\ref{Prop:AbsRegular}, the product $\prod_{v\in \mathcal{V}_K\backslash S_\A}W_{\A}(q_v, q_v^{-s})$ has abscissa of convergence $\alpha$ with the desired properties. Moreover, for every $w\in S_\A$, the abscissa of convergence of $\zeta^{\operatorname{ask}}_{\bm{\mu}_\A/\mathcal{O}_w}(s)$ is strictly less than $\alpha$ in view of Proposition~\ref{Prop:LocalPoles}. This concludes the proof since $S_\A$ is finite.
\end{proof}

\subsection{Reduced and topological relatives}
We determine the reduced and topological zeta functions associated with $\zeta^{\operatorname{ask}}_{\bm{\mu}_\A/\mathcal{O}_v}(s)$, which we denote by $Z^{\operatorname{red}}_{\bm{\mu}_\A}(T)$, and $\zeta^{\operatorname{top}}_{\bm{\mu}_\A}(s)$ respectively. Informally speaking, these functions are obtained as two different types of limits as $q_v\to 1$; see \cite[Sec.~7]{RossmannComputing/18}, \cite[Sec.~5]{Rossmann/15} and references therein. 

The reduced relatives of all ask zeta functions have been established in \cite[Sec.~4.6]{RossmannAsk/18}, whence  $$Z^{\operatorname{red}}_{\bm{\mu}_\A}(T)=\frac{1}{1-T}.$$ This result can also be deduced from Corollary~\ref{CorollarymfHP}: consider the bivariate rational function $$\mathsf{mfHP}_{\A}(-X^{-1}, (X^{d(\A)-r(y)-\sum_{x\not\leq y}\mu_x}T)_{y\in \La(\A)})\in \mathbb{Q}(X, T)$$ and set $X=1$. The result being $1/(1-T)$ simply reflects the fact that the Poincar\'e polynomial $\pi_{\mathcal{B}}(X)$ of any central, non empty arrangement $\mathcal{B}$ is divisible by $(1+X)$; cf.~\cite[Prop.~2.51]{orlik_arrangements_1992}.

Adapting the proof of \cite[Cor.~1.5]{MaglioneVoll/24} enables one to obtain the topological relatives. Given any flag $F\in \Delta(\La(\A))$, we define $$\underline{\pi}^{\circ}_F(X):= \frac{\underline{\pi}_F(X)}{(1+X)^{\lvert F\rvert}}\in \mathbb{Z}[X],$$
which is a polynomial in view of \cite[Lem.~2.3]{MaglioneVoll/24}. 
\begin{proposition}
    Let $\A$ be a central hyperplane arrangement defined over a field of characteristic zero, and let $\bm{\mu}=(\mu_x)_{x\in \La(\A)}$ be a tuple of nonnegative integers. Then $$\zeta^{\operatorname{top}}_{\bm{\mu}_\A}(s)=\sum_{F\in \Delta(\La(\A))} \underline{\pi}^{\circ}_F(-1) \prod_{x\in F} \frac{1}{s-d(\A)+r(x)+\sum_{y\not\leq x}\mu_y}.$$
\end{proposition}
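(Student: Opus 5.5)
Starting from Corollary~\ref{CorollarymfHP}, I will take the topological limit term by term, much as in the proof of \cite[Cor.~1.5]{MaglioneVoll/24}. Since $\A$ is defined over a field of characteristic zero, it is representable over a number field $K$ and has good reduction at almost all places $v\in\mathcal{V}_K$. For such $v$, Corollary~\ref{CorollarymfHP} gives
\[
\zeta^{\operatorname{ask}}_{\bm{\mu}_\A/\mathcal{O}_v}(s)=\sum_{F\in\Delta(\La(\A))}\underline{\pi}_F(-q_v^{-1})\prod_{x\in F}\frac{q_v^{-(s-A_x)}}{1-q_v^{-(s-A_x)}},
\]
where $A_x:=d(\A)-r(x)-\sum_{y\not\leq x}\mu_y$. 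The topological zeta function is the coefficient of the leading term when $q_v^{-1}$ is formally expanded about $1$. Concretely, I write $q=e^{\varepsilon}$, or equivalently expand in powers of $(q-1)$, and extract the constant term of $(q-1)^{N}Z$ for the appropriate $N$. Following \cite[Sec.~5]{Rossmann/15}, the normalisation is chosen so that a factor $\frac{1}{1-q^{-a}}$ contributes $\frac{1}{(q-1)a}+O(1)$.

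Now I factor each summand. By the divisibility result \cite[Lem.~2.3]{MaglioneVoll/24}, already invoked in the definition of $\underline{\pi}^{\circ}_F$, we have $\underline{\pi}_F(X)=(1+X)^{\lvert F\rvert}\underline{\pi}^{\circ}_F(X)$. Substituting $X=-q^{-1}$ gives $(1-q^{-1})^{\lvert F\rvert}\underline{\pi}^{\circ}_F(-q^{-1})$. Distributing one factor $(1-q^{-1})$ onto each $x\in F$, the summand for $F$ becomes
\[
\underline{\pi}^{\circ}_F(-q^{-1})\prod_{x\in F}\frac{(1-q^{-1})\,q^{-(s-A_x)}}{1-q^{-(s-A_x)}}.
\]
As $q\to 1$, each ratio $\frac{1-q^{-1}}{1-q^{-(s-A_x)}}$ tends to $\frac{1}{s-A_x}$, the factor $q^{-(s-A_x)}$ tends to $1$, and $\underline{\pi}^{\circ}_F(-q^{-1})$ tends to $\underline{\pi}^{\circ}_F(-1)$. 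Every summand, including the empty flag with value $\underline{\pi}_\varnothing(-q^{-1})=1$, therefore has a finite limit. This gives exactly
\[
\sum_F\underline{\pi}^{\circ}_F(-1)\prod_{x\in F}\frac{1}{s-d(\A)+r(x)+\sum_{y\not\leq x}\mu_y}.
\]
There is no $(q-1)$-power bookkeeping left over, because the normalisation exponent is zero here.

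The main obstacle is making this formal limit legitimate. The topological zeta function is defined via Denef--Loeser-type limits, which require a uniform expression valid for almost all $v$. That expression is supplied by Corollary~\ref{CorollarymfHP} together with the good reduction of $\A$ at almost all places \cite[Prop.~5.13]{Stanley/07}. I also need to check that the limit is taken with the same normalisation Rossmann uses for ask zeta functions, so that no stray factor of $(1-q^{-1})^{-1}$ appears. That factor has already been absorbed by Theorem~\ref{Thm:mfHPandfHP} in passing to $\mathsf{mfHP}_\A$, which is why the formula has no prefactor. Finally, for central $\A$ the top element may be split off using \eqref{eq:mfHPcentral}, giving a factor $\frac{1}{s-d(\A)+r(\A)}$; this serves as a consistency check but is not needed for the argument.
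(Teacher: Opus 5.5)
Your argument is the same as the paper's: pull $(1+X)^{\lvert F\rvert}$ out of $\underline{\pi}_F$ and pass to the limit flag by flag. However, the factorisation step fails, and with it your claim that every summand has a finite limit.

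For $F=(x_1<\dots<x_l)$ one has $\underline{\pi}_F(X)=\prod_{i=1}^{l}\pi_{\A^{x_i}_{x_{i+1}}}(X)$ with $x_{l+1}=\varnothing$, so the last factor is $\pi_{\A^{x_l}}(X)$. If $x_l=\hat 1$, then $\A^{\hat 1}$ is empty and this factor is $1$. Thus \cite[Lem.~2.3]{MaglioneVoll/24} only gives divisibility by $(1+X)^{\lvert F\rvert-1}$ for flags through $\hat 1$. In fact $\underline{\pi}_{(\hat 1)}=1$, and $\underline{\pi}_{F\cup\{\hat 1\}}=\underline{\pi}_F$ whenever $\hat 1\notin F$; compare Example~\ref{mfHPBoolean} with $F=([n])$.

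The summand for $F=(\hat 1)$ is therefore $T_{\hat 1}/(1-T_{\hat 1})$ with $T_{\hat 1}=q^{-(s-d(\A)+r(\A))}$, which diverges as $q\to 1$. Nothing cancels it: by \eqref{eq:mfHPcentral}, $\ZA=(1-T_{\hat 1})^{-1}S$, where $S$ is the sum over flags avoiding $\hat 1$ and tends to a rational function of the form $1+O(s^{-1})\neq 0$. This is the factor $(1-q^{-1})^{-1}$ of Theorem~\ref{ThmMainAskIgusa} resurfacing. Theorem~\ref{Thm:mfHPandfHP} moves it inside $\mathsf{mfHP}_\A$ but does not remove it, contrary to your claim that it was absorbed. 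So your normalisation exponent is $1$, not $0$. Your closing consistency check, done carefully, yields $1/((s-d(\A)+r(\A))(q-1))$ rather than $1/(s-d(\A)+r(\A))$.

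Concretely, take $\A=\mathrm{A}_1^1$ with $\mu_{\hat 0}=0$ and $\mu_{\hat 1}=1$. Then $A_\A^{\bm{\mu}}=(Z)$ and $\ZA=(1-q^{-1-s})/(1-q^{-s})^2\sim (s+1)/(s^2(q-1))$. Meanwhile the right-hand side of the statement contains $\underline{\pi}^{\circ}_{(\hat 1)}(-1)$ with $\underline{\pi}^{\circ}_{(\hat 1)}(X)=(1+X)^{-1}$, which is undefined.

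The paper's proof makes exactly the same two assertions: that $\underline{\pi}^{\circ}_F\in\mathbb{Z}[X]$ for every $F$, and that one should take the constant term. So you have inherited this gap rather than introduced it. The method does work after a repair:
\begin{enumerate}
\item Normalise by $(1-q^{-1})$, i.e.\ take the coefficient of $(q-1)^{-1}$.
\item Split off $\hat 1$ via \eqref{eq:mfHPcentral} before factorising.
\item Observe that for $F\in\Delta(\La(\A)\setminus\{\hat 1\})$ every factor of $\underline{\pi}_F$, including $\pi_{\A^{\max F}}$, is the Poincar\'e polynomial of a nonempty central arrangement. On these flags $\underline{\pi}^{\circ}_F$ genuinely is a polynomial.
\end{enumerate}
Your termwise limit then gives
\[
\lim_{q\to 1}(1-q^{-1})\ZA=\frac{1}{s-d(\A)+r(\A)}\sum_{F\in\Delta(\La(\A)\setminus\{\hat 1\})}\underline{\pi}^{\circ}_F(-1)\prod_{x\in F}\frac{1}{s-d(\A)+r(x)+\sum_{y\not\leq x}\mu_y}.
\]
In the example above this equals $(s+1)/s^2$. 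This formula, not the statement as written, is what the argument establishes.
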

\begin{proof}
    Let $Q$ be an indeterminate. For any complex variable $\alpha$, by means of the binomial series, we consider the power series
    $$\frac{Q^{\alpha}-1}{Q-1}=\frac{(1+(Q-1))^{\alpha}-1}{Q-1}=\sum_{k=0}^{\infty} \binom{\alpha}{k+1}(Q-1)^k \in \mathbb{Q}(\alpha)\llbracket Q-1\rrbracket,$$
    which has constant coefficient $\alpha$.
    Furthermore, given any $f\in \mathbb{Z}[Q]$, the constant term of $f(-Q^{-1})\in \mathbb{Q}\llbracket Q-1\rrbracket$ is given by $ f(-1)$. Indeed, for any positive integer $n$, we have $(-Q^{-1})^n=(-1)^nQ^{-n}$, and the constant term of $Q^{\alpha}\in \mathbb{Q}(\alpha)\llbracket Q-1\rrbracket$ is simply $1$.
    This justifies the second equality of the following computation:
    \begin{align}
    \label{eq:topcstterm}
    &\mathsf{mfHP}_{\A}\left(-Q^{-1}, (Q^{-s+d(\A)-r(x)-\sum_{y\not\leq x}\mu_y})_{x\in\La(\A)}\right) \nonumber \\= &\sum_{F\in \Delta(\La(\A))}Q^{-\lvert F\rvert}\underline{\pi}_F^{\circ}(-Q^{-1})\prod_{x\in F} \frac{Q-1}{Q^{s-d(\A)+r(x)+\sum_{y\not\leq x}\mu_y}-1} \nonumber\\ =&\sum_{F\in \Delta(\La(\A))} \underline{\pi}_F^{\circ}(-1) \prod_{x\in F}\frac{1}{s-d(\A)+r(x)+\sum_{y\not\leq x}\mu_y} \ + O((Q-1)).
    \end{align}
    By \cite[Def.~5.13]{Rossmann/15}, the topological zeta function is given by the $l$-adic limit $$\zeta^{\operatorname{top}}_{\bm{\mu}_\A}(s)=\lim_{\substack{f\to 0\\ f\in \mathbb{N}}}\zeta^{\operatorname{ask}}_{{\bm{\mu}_\A}/\O^{(f)}}(s),$$ where $\A$ has good reduction over the residue field of $\O$, and $\O^{(f)}$ denotes the unramified extension of $\O$ of degree $f$. Hence, in view of Corollary~\ref{CorollarymfHP}, the topological zeta function is given by the constant term in (\ref{eq:topcstterm}).
\end{proof}

\section*{Acknowledgments}
The author is grateful to Joshua Maglione, Tobias Rossmann, and Christopher Voll for many helpful discussions and valuable suggestions. This research was initiated during the program - Combinatorial Methods in Enumerative Algebra (code: ICTS/CMEA2024/12) at the International Centre for Theoretical Sciences (ICTS).

\bibliographystyle{abbrv} 
\bibliography{references}

@article {MaglioneVoll/24,
    AUTHOR = {Maglione, Joshua and Voll, Christopher},
     TITLE = {Flag {H}ilbert-{P}oincar\'e{} series and {I}gusa zeta
              functions of hyperplane arrangements},
   JOURNAL = {Israel J. Math.},
  FJOURNAL = {Israel Journal of Mathematics},
    VOLUME = {264},
      YEAR = {2024},
    NUMBER = {1},
     PAGES = {177--233},
      ISSN = {0021-2172,1565-8511},
   MRCLASS = {14N20 (11M41 14E18 52C35)},
  MRNUMBER = {4842997},
       DOI = {10.1007/s11856-024-2646-5},
       URL = {https://doi.org/10.1007/s11856-024-2646-5},
}

@article {RossmannVoll/24,
    AUTHOR = {Rossmann, Tobias and Voll, Christopher},
     TITLE = {Groups, graphs, and hypergraphs: average sizes of kernels of
              generic matrices with support constraints},
   JOURNAL = {Mem. Amer. Math. Soc.},
  FJOURNAL = {Memoirs of the American Mathematical Society},
    VOLUME = {294},
      YEAR = {2024},
    NUMBER = {1465},
     PAGES = {v+120},
      ISSN = {0065-9266,1947-6221},
      ISBN = {978-1-4704-6868-2; 978-1-4704-7753-0},
   MRCLASS = {11M41 (05A15 05C50 11S80 14M25 15B33 20D15 20E45)},
  MRNUMBER = {4719079},
       DOI = {10.1090/memo/1465},
       URL = {https://doi.org/10.1090/memo/1465},
}

@book {AtiyahMacdonald/69,
    AUTHOR = {Atiyah, M. F. and Macdonald, I. G.},
     TITLE = {Introduction to commutative algebra},
 PUBLISHER = {Addison-Wesley Publishing Co., Reading, Mass.-London-Don
              Mills, Ont.},
      YEAR = {1969},
     PAGES = {ix+128},
   MRCLASS = {13.00},
  MRNUMBER = {242802},
MRREVIEWER = {Johnny\ A.\ Johnson},
}

@book{Igusa/00,
	series = {{AMS}/{IP} studies in advanced mathematics},
	title = {An {Introduction} to the {Theory} of {Local} {Zeta} {Functions}},
	isbn = {978-0-8218-2907-3},
	url = {https://books.google.de/books?id=G7BJAwAAQBAJ},
	publisher = {American Mathematical Society},
	author = {Igusa, J.},
	year = {2000},
	lccn = {99087031},
}

@article{Stanley/07,
  title={An introduction to hyperplane arrangements},
  author={Stanley, Richard P},
  journal={Geometric combinatorics},
  volume={13},
  pages={389--496},
  year={2007}
}

@book{orlik_arrangements_1992,
	address = {Berlin, Heidelberg},
	series = {Grundlehren der mathematischen {Wissenschaften}},
	title = {Arrangements of {Hyperplanes}},
	volume = {300},
	isbn = {978-3-642-08137-8 978-3-662-02772-1},
	url = {http://link.springer.com/10.1007/978-3-662-02772-1},
	publisher = {Springer},
	author = {Orlik, Peter and Terao, Hiroaki},
	year = {1992},
	doi = {10.1007/978-3-662-02772-1},
}

@article {RossmannAsk/18,
    AUTHOR = {Rossmann, Tobias},
     TITLE = {The average size of the kernel of a matrix and orbits of
              linear groups},
   JOURNAL = {Proc. Lond. Math. Soc. (3)},
  FJOURNAL = {Proceedings of the London Mathematical Society. Third Series},
    VOLUME = {117},
      YEAR = {2018},
    NUMBER = {3},
     PAGES = {574--616},
      ISSN = {0024-6115,1460-244X},
   MRCLASS = {11S80 (05A15 11M41 20D15 20E45)},
  MRNUMBER = {3857694},
MRREVIEWER = {Bertin\ Diarra},
       DOI = {10.1112/plms.12159},
       URL = {https://doi.org/10.1112/plms.12159},
}

@book{apostol_modular_1990,
	address = {New York, NY},
	series = {Graduate {Texts} in {Mathematics}},
	title = {Modular {Functions} and {Dirichlet} {Series} in {Number} {Theory}},
	volume = {41},
	copyright = {http://www.springer.com/tdm},
	isbn = {978-1-4612-6978-6 978-1-4612-0999-7},
	url = {http://link.springer.com/10.1007/978-1-4612-0999-7},
	urldate = {2025-04-29},
	publisher = {Springer},
	author = {Apostol, Tom M.},
	year = {1990},
	doi = {10.1007/978-1-4612-0999-7},
}

@book{oxley_matroid_2011,
	title = {Matroid {Theory}},
	isbn = {978-0-19-856694-6},
	url = {https://doi.org/10.1093/acprof:oso/9780198566946.001.0001},
	publisher = {Oxford University Press},
	author = {Oxley, James},
	month = feb,
	year = {2011},
	doi = {10.1093/acprof:oso/9780198566946.001.0001},
	doi = {10.1093/acprof:oso/9780198566946.001.0001},
}

@book{petersen_eulerian_2015,
	address = {New York, NY},
	series = {Birkhäuser {Advanced} {Texts} {Basler} {Lehrbücher}},
	title = {Eulerian {Numbers}},
	copyright = {https://www.springernature.com/gp/researchers/text-and-data-mining},
	isbn = {978-1-4939-3090-6 978-1-4939-3091-3},
	url = {https://link.springer.com/10.1007/978-1-4939-3091-3},
	language = {en},
	urldate = {2025-10-20},
	publisher = {Springer},
	author = {Petersen, T. Kyle},
	year = {2015},
	doi = {10.1007/978-1-4939-3091-3},
}

@article {Stump/25,
    AUTHOR = {Stump, Christian},
     TITLE = {Chow and augmented {C}how polynomials as evaluations of
              {P}oincar\'e-extended {${\bf ab}$}-indices},
   JOURNAL = {Adv. Math.},
  FJOURNAL = {Advances in Mathematics},
    VOLUME = {482},
      YEAR = {2025},
     PAGES = {Paper No. 110618, 13},
      ISSN = {0001-8708,1090-2082},
   MRCLASS = {06A07 (05B35 52C40)},
  MRNUMBER = {4975905},
MRREVIEWER = {Joseph\ Kung},
       DOI = {10.1016/j.aim.2025.110618},
       URL = {https://doi.org/10.1016/j.aim.2025.110618},
}

@article {Rossmann/15,
    AUTHOR = {Rossmann, Tobias},
     TITLE = {Computing topological zeta functions of groups, algebras, and
              modules, {I}},
   JOURNAL = {Proc. Lond. Math. Soc. (3)},
  FJOURNAL = {Proceedings of the London Mathematical Society. Third Series},
    VOLUME = {110},
      YEAR = {2015},
    NUMBER = {5},
     PAGES = {1099--1134},
      ISSN = {0024-6115,1460-244X},
   MRCLASS = {11M41 (14E18 14G10 17A01 20F69 52B20)},
  MRNUMBER = {3349788},
MRREVIEWER = {Michael\ M.\ Schein},
       DOI = {10.1112/plms/pdv012},
       URL = {https://doi.org/10.1112/plms/pdv012},
}

@article {RossmannAskII/20,
    AUTHOR = {Rossmann, Tobias},
     TITLE = {The average size of the kernel of a matrix and orbits of
              linear groups, {II}: duality},
   JOURNAL = {J. Pure Appl. Algebra},
  FJOURNAL = {Journal of Pure and Applied Algebra},
    VOLUME = {224},
      YEAR = {2020},
    NUMBER = {4},
     PAGES = {106203, 28},
      ISSN = {0022-4049,1873-1376},
   MRCLASS = {20D15 (05A15 11M41 11S80 13E05 15B33 20E45)},
  MRNUMBER = {4021917},
MRREVIEWER = {Andrei\ Jaikin-Zapirain},
       DOI = {10.1016/j.jpaa.2019.106203},
       URL = {https://doi.org/10.1016/j.jpaa.2019.106203},
}

@article {KuhneMaglione/23,
    AUTHOR = {K\"uhne, Lukas and Maglione, Joshua},
     TITLE = {On the geometry of flag {H}ilbert-{P}oincar\'e{} series for
              matroids},
   JOURNAL = {Algebr. Comb.},
  FJOURNAL = {Algebraic Combinatorics},
    VOLUME = {6},
      YEAR = {2023},
    NUMBER = {3},
     PAGES = {623--638},
      ISSN = {2589-5486},
   MRCLASS = {05B35 (52C40)},
  MRNUMBER = {4614155},
MRREVIEWER = {Winfried\ Hochst\"attler},
       DOI = {10.5802/alco.276},
       URL = {https://doi.org/10.5802/alco.276},
}

@article {DorpalenMaglioneStump/25,
    AUTHOR = {Dorpalen-Barry, Galen and Maglione, Joshua and Stump,
              Christian},
     TITLE = {The {P}oincar\'e-extended {$\mathbf{ab}$}-index},
      NOTE = {With an appendix by Ricky Ini Liu},
   JOURNAL = {J. Lond. Math. Soc. (2)},
  FJOURNAL = {Journal of the London Mathematical Society. Second Series},
    VOLUME = {111},
      YEAR = {2025},
    NUMBER = {1},
     PAGES = {Paper No. e70054, 33},
      ISSN = {0024-6107,1469-7750},
   MRCLASS = {06A07 (05E05 52C40)},
  MRNUMBER = {4843584},
MRREVIEWER = {Brigitte\ Servatius},
       DOI = {10.1112/jlms.70054},
       URL = {https://doi.org/10.1112/jlms.70054},
}

@article {AvniKlopschOnnVoll/13,
    AUTHOR = {Avni, Nir and Klopsch, Benjamin and Onn, Uri and Voll,
              Christopher},
     TITLE = {Representation zeta functions of compact {$p$}-adic analytic
              groups and arithmetic groups},
   JOURNAL = {Duke Math. J.},
  FJOURNAL = {Duke Mathematical Journal},
    VOLUME = {162},
      YEAR = {2013},
    NUMBER = {1},
     PAGES = {111--197},
      ISSN = {0012-7094,1547-7398},
   MRCLASS = {22E50 (11M41 20C15 20G25 22E40)},
  MRNUMBER = {3011874},
MRREVIEWER = {A.\ H.\ Dooley},
       DOI = {10.1215/00127094-1959198},
       URL = {https://doi.org/10.1215/00127094-1959198},
}

@article {Voll/10,
    AUTHOR = {Voll, Christopher},
     TITLE = {Functional equations for zeta functions of groups and rings},
   JOURNAL = {Ann. of Math. (2)},
  FJOURNAL = {Annals of Mathematics. Second Series},
    VOLUME = {172},
      YEAR = {2010},
    NUMBER = {2},
     PAGES = {1181--1218},
      ISSN = {0003-486X,1939-8980},
   MRCLASS = {20E07 (11S40 16P90 39B52)},
  MRNUMBER = {2680489},
MRREVIEWER = {Alexander\ Fel\cprime shtyn},
       DOI = {10.4007/annals.2010.172.1185},
       URL = {https://doi.org/10.4007/annals.2010.172.1185},
}

@article {Voll/19,
    AUTHOR = {Voll, Christopher},
     TITLE = {Local functional equations for submodule zeta functions
              associated to nilpotent algebras of endomorphisms},
   JOURNAL = {Int. Math. Res. Not. IMRN},
  FJOURNAL = {International Mathematics Research Notices. IMRN},
      YEAR = {2019},
    NUMBER = {7},
     PAGES = {2137--2176},
      ISSN = {1073-7928,1687-0247},
   MRCLASS = {11M41 (11S40 16W20 20E07)},
  MRNUMBER = {3938319},
MRREVIEWER = {Shin-ya\ Koyama},
       DOI = {10.1093/imrn/rnx186},
       URL = {https://doi.org/10.1093/imrn/rnx186},
}

@article {RossmannComputing/18,
    AUTHOR = {Rossmann, Tobias},
     TITLE = {Computing local zeta functions of groups, algebras, and
              modules},
   JOURNAL = {Trans. Amer. Math. Soc.},
  FJOURNAL = {Transactions of the American Mathematical Society},
    VOLUME = {370},
      YEAR = {2018},
    NUMBER = {7},
     PAGES = {4841--4879},
      ISSN = {0002-9947,1088-6850},
   MRCLASS = {11M41 (20C15 20F18 20F69 20G30)},
  MRNUMBER = {3812098},
MRREVIEWER = {Asif\ Zaman},
       DOI = {10.1090/tran/7361},
       URL = {https://doi.org/10.1090/tran/7361},
}

@article {ConciniProcesi/95,
    AUTHOR = {De Concini, C. and Procesi, C.},
     TITLE = {Wonderful models of subspace arrangements},
   JOURNAL = {Selecta Math. (N.S.)},
  FJOURNAL = {Selecta Mathematica. New Series},
    VOLUME = {1},
      YEAR = {1995},
    NUMBER = {3},
     PAGES = {459--494},
      ISSN = {1022-1824,1420-9020},
   MRCLASS = {14D99 (32G13 52B30)},
  MRNUMBER = {1366622},
MRREVIEWER = {V.\ Leksin},
       DOI = {10.1007/BF01589496},
       URL = {https://doi.org/10.1007/BF01589496},
}

@article {FeichtnerKozlov/04,
    AUTHOR = {Feichtner, Eva-Maria and Kozlov, Dmitry N.},
     TITLE = {Incidence combinatorics of resolutions},
   JOURNAL = {Selecta Math. (N.S.)},
  FJOURNAL = {Selecta Mathematica. New Series},
    VOLUME = {10},
      YEAR = {2004},
    NUMBER = {1},
     PAGES = {37--60},
      ISSN = {1022-1824,1420-9020},
   MRCLASS = {06A07 (05E99 14E15 14M25 52C35)},
  MRNUMBER = {2061222},
MRREVIEWER = {Henry\ K.\ Schenck},
       DOI = {10.1007/s00029-004-0298-1},
       URL = {https://doi.org/10.1007/s00029-004-0298-1},
}

\end{document}